\newtheorem{theorem}{Theorem}[section]
\newtheorem{lemma}[theorem]{Lemma}
\newtheorem{proposition}[theorem]{Proposition}
\newtheorem{corollary}[theorem]{Corollary}
\theoremstyle{remark}
\newtheorem{remark}[theorem]{Remark}
\theoremstyle{definition}
\newtheorem{example}[theorem]{Example}
\newtheorem{definition}[theorem]{Definition}
\title{Multigraded Betti numbers of some path ideals}
\author{Nursel Erey \footnote{Department of Mathematics and Statistics, Dalhousie University, Halifax, Nova Scotia, Canada B3H 4R2 e-mail: nurselerey@gmail.com}}
\date{\vspace{-5ex}} 
\begin{document}
\maketitle


\begin{abstract}
We determine (multi)graded Betti numbers of path ideals of lines and star graphs.
\end{abstract}

\section{Introduction}

The path ideal of a directed graph was introduced by Conca and De Negri \cite{conca} and recently these ideals have been studied by many authors, see \cite{alilooee faridi 2, alilooee faridi, banerjee, bouchat ha okeefe, morey et al, he van tuyl, Kubitzke Olteanu, kubik}. In this paper we consider the path ideals of undirected graphs. In particular, we give formulas for Betti numbers of path ideals of lines and stars extending the work of \cite{alilooee faridi 2}.

\section{Preliminaries}
\subsection{Simplicial complexes and homology}
An \textbf{abstract simplicial complex} $\Delta$ on a set of \textbf{vertices} $V(\Delta)=\{v_1,...,v_n\}$ is a collection of subsets of $V$ such that $\{v_i\}\in \Delta$ for all $i$ and, $F\in \Delta$ implies that all subsets of $F$ are also in $\Delta .$ The elements of $\Delta$ are called \textbf{faces} and the maximal faces under inclusion are called \textbf{facets}. If the facets $F_1,...,F_q$ generate $\Delta$, we write $\Delta= \langle F_1,...,F_q\rangle$ or $\operatorname{Facets}(\Delta)=\{F_1,...,F_q\}$.

A face $\{v_1,v_2,...,v_n\}-\{v_{i_1},...,v_{i_s}\}$ will be denoted by $\{v_1,...,\widehat{v}_{i_1},...,\widehat{v}_{i_s},...,v_n  \}$ for $i_1 < i_2 < ....< i_s.  $ 

Two simplicial complexes $\Delta$ and $\Gamma$ are \textbf{isomorphic} if there is a bijection $\varphi:V(\Delta)\rightarrow V(\Gamma)$ between their vertex sets such that $F$ is a face of $\Delta$ iff $\varphi(F)$ is a face of $\Gamma$.

Let $\Delta$ and $\Gamma$ be simplicial complexes which has no common vertices. Then the \textbf{join} of $\Delta$ and $\Gamma$ is the simplicial complex given by $$ \Delta \ast \Gamma =\{\delta \cup \gamma : \delta \in \Delta, \gamma \in \Gamma  \}. $$

A \textbf{cone} with \textbf{apex} $v$ is a special join obtained by joining a simplicial complex $\Delta$ with $\{\emptyset, v\}$ where $v$ is an element which is not in the vertex set of $\Delta$. Equivalently, a simplicial complex is a cone with apex $v$ if $v$ is a member of every facet.

For each integer $ i $, the $ \Bbbk $ -vector space $ \widetilde{H}_i(\Delta,\Bbbk) $ is the $ i^{th} $ \textbf{reduced homology} of $ \Delta $ over $ \Bbbk $. For the sake of simplicity, we drop $\Bbbk $ and write $\widetilde{H}_i(\Delta)$ whenever we work on a fixed ground field $\Bbbk$.

A \textbf{simplex} is a simplicial complex that contains all subsets of its nonempty vertex set. The \textbf{boundary} $\Sigma$ of a simplex $\Delta=\langle \{v_1,...,v_n\}\rangle$ is obtained from $\Delta$ by removing the maximal face of $\Delta$. And, the homology groups of $\Sigma$ are given by
\begin{equation}\label{homology of boundary}
    \widetilde{H}_{p}(\Sigma, \Bbbk) \cong
    \begin{cases}
       \Bbbk & \text{if } p = n-2   \\
      0 & \text{ otherwise. } 
    \end{cases}
  \end{equation}

The \textbf{irrelevant complex} $\{\emptyset\}$ has the homology groups
\begin{equation}\label{homology of irrelevant complex}
    \widetilde{H}_{p}(\{\emptyset\}, \Bbbk) \cong
    \begin{cases}
       \Bbbk & \text{if } p = -1   \\
      0 & \text{ otherwise. } 
    \end{cases}
  \end{equation}
whereas the \textbf{void complex} $\{\}$ has trivial reduced homology in all degrees.

A simplicial complex $\Delta$ is \textbf{acyclic} (over $\Bbbk$) if $ \widetilde{H}_i(\Delta,\Bbbk) $ is trivial for all $i$. Examples of acyclic complexes include cones and simplices.

The homology of two simplicial complexes is related to homology of their union and intersection by the Mayer-Vietoris long exact sequence:

\begin{theorem}[Corollary 6.4, \cite{Rotman algebraic topology}]\label{Mayer-Vietoris theorem} Let $\Delta_1$ and $\Delta_2$ be two simplicial complexes. Then there is a long exact sequence
\begin{equation}\label{MV sequence} \cdots \rightarrow \widetilde{H}_{p}(\Delta_1)\oplus \widetilde{H}_{p}(\Delta_2) \rightarrow \widetilde{H}_{p}(\Delta_1 \cup \Delta_2) \rightarrow \widetilde{H}_{p-1}(\Delta_1 \cap \Delta_2) \rightarrow \widetilde{H}_{p-1}(\Delta_1)\oplus \widetilde{H}_{p-1}(\Delta_2)\rightarrow \cdots
\end{equation}
where the homology can be taken over any field.
\end{theorem}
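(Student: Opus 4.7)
The plan is to derive the long exact sequence from a short exact sequence of augmented chain complexes and then invoke the zig-zag lemma. Concretely, I would work with the augmented simplicial chain complex $\widetilde{C}_\bullet(\Delta)$, which agrees with the ordinary chain complex in nonnegative degrees and carries an extra copy of $\Bbbk$ in degree $-1$, and whose homology is precisely the reduced homology of $\Delta$. The key construction is the short exact sequence of chain complexes
\[
0 \longrightarrow \widetilde{C}_\bullet(\Delta_1 \cap \Delta_2) \overset{\alpha}{\longrightarrow} \widetilde{C}_\bullet(\Delta_1) \oplus \widetilde{C}_\bullet(\Delta_2) \overset{\beta}{\longrightarrow} \widetilde{C}_\bullet(\Delta_1 \cup \Delta_2) \longrightarrow 0,
\]
with $\alpha(c) = (c, -c)$ and $\beta(a, b) = a + b$; these are well-defined chain maps because the inclusions $\Delta_1 \cap \Delta_2 \hookrightarrow \Delta_i \hookrightarrow \Delta_1 \cup \Delta_2$ are all simplicial, and the sign convention guarantees $\beta \circ \alpha = 0$.

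Next I would verify exactness at each of the three spots. Injectivity of $\alpha$ and surjectivity of $\beta$ reduce to the basis-level observation that every oriented simplex of $\Delta_1 \cup \Delta_2$ lies in at least one of the $\Delta_i$, so can be lifted. The composition $\beta\circ\alpha$ vanishes by inspection. For exactness in the middle, if $(a,b)$ satisfies $a + b = 0$ in $\widetilde{C}_\bullet(\Delta_1 \cup \Delta_2)$, then comparing coefficients of each simplex $\sigma$ appearing in $a$ or $b$ forces $\sigma$ to appear in both with opposite sign; hence every such $\sigma$ lies in $\Delta_1 \cap \Delta_2$, and the element $c = a$, viewed in $\widetilde{C}_\bullet(\Delta_1 \cap \Delta_2)$, satisfies $\alpha(c) = (a,b)$.

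Finally, I would apply the zig-zag lemma to this short exact sequence to extract the long exact sequence in homology, which is exactly the Mayer--Vietoris sequence (\ref{MV sequence}). The step requiring the most care is the bookkeeping at the augmentation degree: one must confirm that the sequence remains short exact at the level of $\widetilde{C}_{-1}$, which amounts to the elementary fact that $0 \to \Bbbk \to \Bbbk \oplus \Bbbk \to \Bbbk \to 0$ (with the analogous $\alpha$, $\beta$) is exact. Once this is checked, the resulting long sequence is honestly in reduced homology and extends properly through degree $0$, and since no integrality was used the entire argument is valid over any field $\Bbbk$, as claimed.
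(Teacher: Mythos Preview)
The paper does not prove this theorem at all: it is quoted verbatim as Corollary~6.4 of Rotman \cite{Rotman algebraic topology} and used as a black box. So there is no in-paper argument to compare against.

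Your proposal is the standard textbook derivation (and indeed is essentially Rotman's own proof): build the short exact sequence of augmented simplicial chain complexes, verify exactness degreewise on the simplex basis, and apply the zig-zag lemma. The argument is correct as written. One small caveat worth making explicit: your check at degree $-1$, namely that $0 \to \Bbbk \to \Bbbk \oplus \Bbbk \to \Bbbk \to 0$ is exact, tacitly assumes that $\Delta_1$, $\Delta_2$, and $\Delta_1 \cap \Delta_2$ are all nonempty (at least the irrelevant complex $\{\emptyset\}$). If $\Delta_1 \cap \Delta_2$ is the void complex the degree $-1$ row becomes $0 \to 0 \to \Bbbk \oplus \Bbbk \to \Bbbk \to 0$, which is not exact, and the reduced Mayer--Vietoris sequence fails in low degrees. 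This does not affect any application in the present paper (every use involves subcomplexes sharing vertices), but since you flagged the augmentation bookkeeping as the delicate step, you may as well state the nonemptiness hypothesis.
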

A particular case of Theorem \ref{Mayer-Vietoris theorem} occurs when a simplicial complex $\Delta=\Delta_1 \cup \Delta_2$ is a union of two acyclic subcomplexes $\Delta_1$ and $\Delta_2$. In that case, the sequence (\ref{MV sequence}) becomes
$$\cdots \rightarrow 0 \rightarrow \widetilde{H}_{p}(\Delta_1 \cup \Delta_2) \rightarrow \widetilde{H}_{p-1}(\Delta_1 \cap \Delta_2) \rightarrow 0 \rightarrow \cdots $$
whence $\widetilde{H}_{p}(\Delta_1 \cup \Delta_2)$ and $\widetilde{H}_{p-1}(\Delta_1 \cap \Delta_2)$ are isomorphic for all $p$. Since we will make frequent use of this specific case we state it separately as an immediate Corollary.

\begin{corollary}\label{Corollary to MV acyclic union}If $\Delta_1$ and $\Delta_2$ are acyclic simplicial complexes then  $$\widetilde{H}_{p}(\Delta_1 \cup \Delta_2) \cong \widetilde{H}_{p-1}(\Delta_1 \cap \Delta_2) $$
for every $p$ and the homology can be taken over any field.
\end{corollary}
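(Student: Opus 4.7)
The plan is to apply the Mayer-Vietoris sequence (Theorem \ref{Mayer-Vietoris theorem}) directly to the pair $\Delta_1, \Delta_2$ and exploit the acyclicity hypothesis to collapse all but two terms in each relevant slice of the long exact sequence.

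First I would write down the portion of the sequence (\ref{MV sequence}) centered at the degree $p$ of interest, namely
\[
\widetilde{H}_{p}(\Delta_1)\oplus \widetilde{H}_{p}(\Delta_2) \rightarrow \widetilde{H}_{p}(\Delta_1 \cup \Delta_2) \rightarrow \widetilde{H}_{p-1}(\Delta_1 \cap \Delta_2) \rightarrow \widetilde{H}_{p-1}(\Delta_1)\oplus \widetilde{H}_{p-1}(\Delta_2).
\]
By the acyclicity assumption, $\widetilde{H}_i(\Delta_j)=0$ for all $i$ and for $j=1,2$, so the direct sums on both ends vanish. The sequence therefore degenerates to
\[
0 \rightarrow \widetilde{H}_{p}(\Delta_1 \cup \Delta_2) \rightarrow \widetilde{H}_{p-1}(\Delta_1 \cap \Delta_2) \rightarrow 0,
\]
and exactness forces the middle map to be an isomorphism, as claimed. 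Since the field $\Bbbk$ plays no role beyond defining reduced homology with coefficients in it, the conclusion is valid over any field.

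There is really no obstacle here: the entire argument is just bookkeeping on the Mayer-Vietoris sequence once one invokes vanishing of the outer terms. The only place where care is mildly needed is to make sure the indexing in (\ref{MV sequence}) is applied to the correct slice, so that what sits to the left of $\widetilde{H}_{p}(\Delta_1 \cup \Delta_2)$ is $\widetilde{H}_{p}(\Delta_1)\oplus \widetilde{H}_{p}(\Delta_2)$ and what sits to the right of $\widetilde{H}_{p-1}(\Delta_1 \cap \Delta_2)$ is $\widetilde{H}_{p-1}(\Delta_1)\oplus \widetilde{H}_{p-1}(\Delta_2)$; both of these vanish by hypothesis, which is exactly the input the corollary needs.
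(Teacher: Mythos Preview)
Your proposal is correct and follows exactly the same approach as the paper: apply the Mayer-Vietoris sequence of Theorem~\ref{Mayer-Vietoris theorem}, use acyclicity of $\Delta_1$ and $\Delta_2$ to kill the direct-sum terms, and read off the isomorphism from the resulting short exact sequence. There is nothing to add.
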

\subsection{Graphs and resolutions}

Let $S=\Bbbk[x_1,...,x_n]$ be the polynomial ring in $n$ variables over a field $\Bbbk$. Given a minimal multigraded free resolution 
$$ 0 \longrightarrow \bigoplus_{\bold{m} \in \mathbb{N}^n} S(-\bold{m})^{b_{r,\bold{m}}(I)} \overset{\partial_r}{\longrightarrow} ...\longrightarrow \bigoplus_{\bold{m} \in \mathbb{N}^n} S(-\bold{m})^{b_{1,\bold{m}}(I)} \overset{\partial_1}{\longrightarrow} \bigoplus_{\bold{m} \in \mathbb{N}^n} S(-\bold{m})^{b_{0,\bold{m}}(I)} \overset{\partial_0}{\longrightarrow} I \longrightarrow 0   $$
of $I$, the associated ranks $b_{i,\bold{m}}(I)$ are called \textbf{multigraded Betti numbers} of $I$. Graded and multigraded Betti numbers are related by the equation
\begin{equation}\label{definition of graded betti number} 
b_{i,j}(I)=\sum_{\deg(m)=j}b_{i,\bold{m}}(I) 
\end{equation} 
where $\deg(m)$ stands for the standard degree of $m$ (i.e
$\deg(x_1^{a_1}...x_n^{a_n})=a_1+...+a_n).$

For a \textbf{graph} $G$, the vertex and edge sets are denoted by $V(G)$ and $E(G)$ respectively. All graphs in this paper should be assumed simple meaning that loopless and without multiple edges. Two vertices $u$ and $v$ are \textbf{adjacent} to one another if $\{u,v\}$ is an edge of $G$. A vertex $u$ of $G$ is called an \textbf{isolated vertex} if it is not adjacent to any vertex of $G$. We will say that $G$ is of \textbf{size} $e$ and of \textbf{order} $n$ if it has $e$ edges and $n$ vertices. For two vertices $u$ and $v$ of $G$, a \textbf{path} of length $t-1$ from $u$ to $v$ is a sequence of $t\geq 2$ distinct vertices $u=z_1,...,z_t=v$ such that $\{z_i, z_{i+1}\}\in E(G)$ for all $i=1,...,t-1$. We will denote by $L_n$ a line of order $n$. Also $C_n$ and $\mathcal{S}_n$ will be a cyle and a star graph of size $n$ respectively.

\begin{figure}[ht]
\centering
\begin{minipage}[b]{0.32\linewidth}
\centering
\begin{tikzpicture}[scale=4.2]
\tikzstyle{every node}=[circle, fill=black!,inner sep=0pt, minimum
width=4pt]

 \node (n_1) at (-.2,.2)[label=left:{}] {}; 
 \node (n_2) at (.2,.2)[label=right:{}] {};
 \node (n_3) at (-.2,-.2)[label=left:{}] {};
 \node (n_4) at (.2,-.2)[label=right:{}] {};

\foreach \from/\to in
  {n_1/n_2, n_1/n_3, n_3/n_4}\draw (\from) -- (\to);
  
\end{tikzpicture}
\caption{$L_4$}
\label{fig:minipage1}
\end{minipage}
\begin{minipage}[b]{0.32\linewidth}
\centering
\begin{tikzpicture}[scale=4.2]
\tikzstyle{every node}=[circle, fill=black!,inner sep=0pt, minimum
width=4pt]

 \node (n_1) at (-.2,.2)[label=left:{}] {}; 
 \node (n_2) at (.2,.2)[label=right:{}] {};
 \node (n_3) at (-.2,-.2)[label=left:{}] {};
 \node (n_4) at (.2,-.2)[label=right:{}] {};

\foreach \from/\to in
  {n_1/n_2, n_1/n_3, n_2/n_4, n_3/n_4}\draw (\from) -- (\to);
  
\end{tikzpicture}

\caption{$C_4$}
\label{fig:minipage2}
\end{minipage}
\begin{minipage}[b]{0.32\linewidth}
\centering
\begin{tikzpicture}[scale=4.2]
\tikzstyle{every node}=[circle, fill=black!,inner sep=0pt, minimum
width=4pt]

 \node (n_1) at (-.2,.2)[label=left:{}] {}; 
 \node (n_2) at (.02,.16)[label=right:{}] {};
 \node (n_3) at (-.2,-.2)[label=left:{}] {};
 \node (n_4) at (.2,-.2)[label=right:{}] {};
 \node (n_5) at (.16,.02)[label=right:{}] {};

\foreach \from/\to in
  {n_3/n_2, n_1/n_3, n_3/n_4, n_3/n_5}\draw (\from) -- (\to);
  
\end{tikzpicture}
\caption{$\mathcal{S}_4$}
\label{fig:minipage3}
\end{minipage}
\end{figure}

If $G$ is a graph with vertex set $V=\{x_1,...,x_n \}$ then its \textbf{path ideal} $I_t(G)$ is the monomial ideal of $S=\Bbbk[x_1,...x_n]$ given by
$$I_t(G)= ( x_{i_1}...x_{i_t} \mid x_{i_1},..., x_{i_t} \text{ is a path of length } t-1 \text{ in } G ). $$
\begin{example}
A graph  $G$ of order $4$ which has the path ideals $I_4(G)=(x_2x_1x_4x_3),I_3(G)=(x_2x_1x_4, x_2x_1x_3, x_1x_3x_4)$, $I_2(G)=( x_1x_2, x_1x_3, x_1x_4, x_3x_4 ), I_1(G)=(x_1, x_2, x_3, x_4). $
\end{example}
\begin{figure}[htp]
\centering
\begin{tikzpicture}[scale=4.2]
\tikzstyle{every node}=[circle, fill=black!,inner sep=0pt, minimum
width=4pt]

 \node (n_1) at (-.2,.2)[label=left:{$x_1$}] {};
 \node (n_2) at (.2,.2)[label=right:{$x_2$}] {};
 \node (n_3) at (-.2,-.2)[label=left:{$x_3$}] {};
 \node (n_4) at (.2,-.2)[label=right:{$x_4$}] {};

\foreach \from/\to in
  {n_1/n_2, n_1/n_3, n_1/n_4, n_3/n_4}\draw (\from) -- (\to);
  
\end{tikzpicture}\end{figure}
For a square-free monomial $m$ we denote by $G_m$ the \textbf{induced subgraph} of $G$ on the set of vertices that divide $m$.

Let $I=(m_1,...,m_s)$ be a monomial ideal of $S$ which is minimally generated by the set of monomials $M=\{m_1,...,m_s\}$. The \textbf{Taylor simplex} $\Theta$ of $I$ is a simplex on $s$ vertices which are labelled with the minimal generators of $I$. If $\tau=\{m_{i_1},...,m_{i_r}\}$ is a face of $\Theta$, then by $\operatorname{lcm}(\tau)$ we mean $\operatorname{lcm}(m_{i_1},...,m_{i_r})$.  For any monomial $m$ in $S$, 
$$ \Theta_{\leq m}= \{\tau \in \Theta \mid \operatorname{lcm}(\tau) \text{ divides } m\} $$ 
and
$$ \Theta_{<m}= \{\tau \in \Theta \mid \operatorname{lcm}(\tau) \text{ strictly divides } m\} $$
are subcomplexes of $\Theta$. Clearly we have the equation
$$\Theta_{<m}= \bigcup_{x_i \vert m}\Theta_{\leq \frac{m}{x_i}} $$
and every facet of $\Theta_{\leq \frac{m}{x_i}}$ is of the form
$$F_i:= V(\Theta_{\leq m})- \{u\in M \mid x_i \text{ does not divide } u \}.$$ Therefore we have
\begin{equation}\label{facets of theta less than m}
F_i \in \operatorname{Facets}(\Theta_{<m}) \Leftrightarrow F_i \text{ is maximal in } \{F_j \mid x_j \text{ divides } m \}.
\end{equation}
The following Theorem will be our main tool to calculate Betti numbers in this paper.
\begin{theorem}[\cite{bayer peeva sturmfels}]\label{betti number by homology} Let I be a monomial ideal of $S$ which is minimally generated by the monomials $m_1,...,m_s$.  Denote by $\Theta$ the Taylor simplex of $I$. For $i \geq 1$, the multigraded Betti numbers of $S/I$ are given by
 \begin{equation}
     b_{i,m} (S/I)=
    \begin{cases}
      \dim_{\Bbbk} \widetilde{H}_{i-2}(\Theta_{<m};\Bbbk) & \text{if}\ m \text{ divides }\ \operatorname{lcm}(m_1,...m_s)\\
      0 & \text{otherwise.}
    \end{cases}
  \end{equation}
\end{theorem}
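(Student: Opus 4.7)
The plan is to identify the multigraded Betti numbers as homology of the Taylor complex specialized to multidegree $\bold{m}$, and then to realize this specialization inside a short exact sequence involving the subcomplexes $\Theta_{<\bold{m}}$ and $\Theta_{\leq \bold{m}}$. Recall that the Taylor complex $T_\bullet$ is a (generally non-minimal) $\mathbb{N}^n$-graded free resolution of $S/I$ in which $T_i=\bigoplus_{|\sigma|=i}S(-\operatorname{lcm}(\sigma))\cdot e_\sigma$ and the differential is the signed simplicial boundary on the full simplex $\Theta$ on the generators. Since
$$ b_{i,\bold{m}}(S/I)=\dim_{\Bbbk}\operatorname{Tor}_i^S(S/I,\Bbbk)_{\bold{m}}=\dim_{\Bbbk}H_i\bigl((T_\bullet\otimes_S\Bbbk)_{\bold{m}}\bigr), $$
the task reduces to computing the homology of the finite-dimensional complex $(T_\bullet\otimes_S\Bbbk)_{\bold{m}}$.

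After tensoring with $\Bbbk$, the multidegree-$\bold{m}$ part of $T_i$ has a $\Bbbk$-basis indexed by the $i$-subsets $\sigma\subseteq\{m_1,\ldots,m_s\}$ with $\operatorname{lcm}(\sigma)=\bold{m}$. If $\bold{m}$ does not divide $\operatorname{lcm}(m_1,\ldots,m_s)$ this basis is empty for every $i$, giving the second case of the formula. So assume $\bold{m}\mid\operatorname{lcm}(m_1,\ldots,m_s)$. The $i$-subsets with $\operatorname{lcm}(\sigma)\mid\bold{m}$ split as those with $\operatorname{lcm}(\sigma)=\bold{m}$ together with those whose lcm strictly divides $\bold{m}$, and the Taylor differential preserves this filtration. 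Reindexing so that $i$-subsets sit in homological degree $i$ (so that reduced simplicial chains $\widetilde{C}_{i-1}$ correspond to $i$-subsets), I get the short exact sequence of chain complexes
$$ 0 \longrightarrow \widetilde{C}_{\bullet-1}(\Theta_{<\bold{m}};\Bbbk) \longrightarrow \widetilde{C}_{\bullet-1}(\Theta_{\leq \bold{m}};\Bbbk) \longrightarrow (T_\bullet\otimes_S\Bbbk)_{\bold{m}} \longrightarrow 0, $$
whose long exact sequence in homology
$$ \cdots \to \widetilde{H}_{i-1}(\Theta_{\leq \bold{m}}) \to H_i\bigl((T_\bullet\otimes_S\Bbbk)_{\bold{m}}\bigr) \to \widetilde{H}_{i-2}(\Theta_{<\bold{m}}) \to \widetilde{H}_{i-2}(\Theta_{\leq \bold{m}}) \to \cdots $$
is the main engine. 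To finish, I would observe that $\Theta_{\leq \bold{m}}$ is always acyclic: because $\operatorname{lcm}(\sigma)\mid\bold{m}$ is equivalent to each $m_j\in\sigma$ dividing $\bold{m}$, the complex $\Theta_{\leq \bold{m}}$ is the full simplex on $\{m_j:m_j\mid\bold{m}\}$, which is contractible whenever it is nonempty (and the degenerate case in which no $m_j$ divides $\bold{m}$ forces $(T_\bullet\otimes_S\Bbbk)_{\bold{m}}$ to vanish in positive homological degrees, so that $b_{i,\bold{m}}(S/I)=0$ for $i\geq 1$ and the formula still holds). Collapsing the long exact sequence then gives $b_{i,\bold{m}}(S/I)=\dim_{\Bbbk}\widetilde{H}_{i-2}(\Theta_{<\bold{m}})$ for every $i\geq 1$.

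The step I expect to be the main obstacle is purely bookkeeping: one must confirm that the signed Taylor differential, once restricted to multidegree $\bold{m}$ after tensoring with $\Bbbk$, literally agrees (up to the homological shift by one) with the reduced simplicial boundary on the labelled Taylor simplex $\Theta$, so that the three chain complexes appearing in the short exact sequence above are genuinely the ones described and not merely isomorphic via some non-canonical identification. Once this identification and the dimension shift are pinned down, the remainder of the argument is a direct application of the long exact sequence together with the contractibility of a simplex.
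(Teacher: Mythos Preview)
The paper does not supply its own proof of this theorem: it is quoted as a result from Bayer--Peeva--Sturmfels and used as a black box throughout. Consequently there is no ``paper's proof'' to compare against, and your argument is precisely the standard one underlying the cited reference: compute $\operatorname{Tor}$ from the Taylor resolution, observe that $(T_\bullet\otimes_S\Bbbk)_{\bold m}$ sits in the short exact sequence with $\widetilde C_{\bullet-1}(\Theta_{<\bold m})$ and $\widetilde C_{\bullet-1}(\Theta_{\leq\bold m})$, and collapse the long exact sequence using the acyclicity of the full simplex $\Theta_{\leq\bold m}$.

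One small point deserves attention. In the degenerate situation where no generator $m_j$ divides $\bold m$, the complex $\Theta_{\leq\bold m}$ is the irrelevant complex $\{\emptyset\}$ rather than a nonempty simplex, and it is \emph{not} acyclic (it has $\widetilde H_{-1}=\Bbbk$). In that case $\Theta_{<\bold m}=\{\emptyset\}$ as well, so the inclusion induces an isomorphism on $\widetilde H_{-1}$ and your long exact sequence still correctly yields $b_{i,\bold m}=0$ for $i\geq 1$; but the displayed formula $\dim_{\Bbbk}\widetilde H_{i-2}(\Theta_{<\bold m})$ would return $1$ at $i=1$. Thus your parenthetical assertion that ``the formula still holds'' in this degenerate case is not quite right: what your argument actually shows is that the Betti number vanishes, which is correct, while the right-hand side of the stated formula does not. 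This is a wrinkle in the theorem statement rather than in your method, and it never arises in the paper's applications, where $\bold m$ is always a product of vertices supporting at least one path.
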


\begin{remark} If $I=(m_1,...,m_s)$ and $q=\deg\operatorname{lcm}(m_1,...,m_s)$ then for any $r > q$ we have $b_{i,r}(I)=0$ for all $i$.  Therefore we call the numbers $b_{i,q}(I), i \in \mathbb{Z}$ as the \textbf{top grade Betti numbers}.
\end{remark}
\begin{remark}\label{isolated vertex}Suppose that $\Delta$ is the Taylor simplex of $I(G)$ for some graph $G$. If the induced graph $G_m$ contains an isolated vertex, then $\Delta_{<m}=\Delta$ is a simplex. So $b_{i,m}(S/I(G))=0$ for all $i$ by Theorem \ref{betti number by homology}.
\end{remark}

\begin{lemma} [\cite{erey faridi}] If $I_1, I_2,...,I_N$ are square-free monomial ideals whose minimal generators contain no common variable and each $I_i$ has minimal generators whose least common multiple is of degree $q_i$, then
\begin{equation}\label{top betti numbers of connected components}
b_{i, q_1+...+q_N}(S/(I_1+I_2+...+I_N))=  \sum_{u_1+...+u_N=i } b_{u_1,q_1}(S/I_1).... b_{u_N,q_N}(S/I_N).
\end{equation}
\end{lemma}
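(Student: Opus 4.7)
The plan is to exploit disjointness of variable sets. Write $S_j$ for the polynomial subring generated by the variables appearing in $I_j$, so that $S \cong S_1 \otimes_{\Bbbk} \cdots \otimes_{\Bbbk} S_N$ and, because the variable sets of the $I_j$ are pairwise disjoint,
$$ S/(I_1+\cdots+I_N) \;\cong\; (S_1/I_1) \otimes_{\Bbbk} \cdots \otimes_{\Bbbk} (S_N/I_N). $$
The key step is to invoke the Künneth principle for free resolutions: if $\mathbb{F}^{(j)}$ is the minimal multigraded free resolution of $S_j/I_j$ over $S_j$, then $\mathbb{F}^{(1)} \otimes_{\Bbbk} \cdots \otimes_{\Bbbk} \mathbb{F}^{(N)}$ is a minimal multigraded free resolution of $S/(I_1+\cdots+I_N)$ over $S$. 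Minimality survives the tensor product precisely because the rings $S_j$ share no variables, so an entry of a differential that lies in the maximal ideal of some $S_j$ still lies in the maximal ideal of $S$.

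Reading off multigraded ranks from this tensor product gives the Künneth formula for multigraded Betti numbers:
$$ b_{i,\bold{m}}\bigl(S/(I_1+\cdots+I_N)\bigr) \;=\; \sum_{\substack{u_1+\cdots+u_N=i \\ \bold{m}_1+\cdots+\bold{m}_N=\bold{m}}} \prod_{j=1}^N b_{u_j,\bold{m}_j}(S_j/I_j). $$
To specialize to the top grade, take $\bold{m}$ with $\deg\bold{m}=q_1+\cdots+q_N$. By the preceding remark, $b_{u_j,\bold{m}_j}(S_j/I_j)=0$ unless $\deg\bold{m}_j \le q_j$, and since $\sum_j \deg\bold{m}_j = \sum_j q_j$, this forces $\deg\bold{m}_j = q_j$ for every $j$. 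Summing the Künneth identity over all such $\bold{m}$ and using (\ref{definition of graded betti number}) factors the right-hand side and yields the claimed formula.

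The only substantive point is the Künneth/minimality assertion for the tensor product of resolutions; the rest is bookkeeping forced by the vanishing of Betti numbers beyond top grade. An alternative, self-contained route would be to apply Theorem \ref{betti number by homology} directly, noting that the Taylor simplex of $I_1+\cdots+I_N$ is the join $\Theta_1 * \cdots * \Theta_N$ and that for $m=\operatorname{lcm}(\text{all generators})$ the subcomplex $\Theta_{<m}$ decomposes as $\bigcup_j \Theta_1 * \cdots * (\Theta_j)_{<m_j^*} * \cdots * \Theta_N$; one then computes its reduced homology via the Künneth formula for joins. However, the tensor-product-of-resolutions route is cleaner and is the natural way to encapsulate the computation.
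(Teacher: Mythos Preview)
The paper does not prove this lemma at all: it is quoted verbatim from \cite{erey faridi} with no argument given, so there is no in-paper proof to compare against. Your tensor-product-of-resolutions approach is the standard one and is correct; the only substantive ingredient is exactly the one you flag, namely that the tensor product over $\Bbbk$ of minimal free resolutions of the $S_j/I_j$ is again a minimal free resolution of their tensor product over $S$. Two small points worth tightening: (i) $S$ may contain variables not appearing in any $I_j$, so strictly $S \cong S_1\otimes_\Bbbk\cdots\otimes_\Bbbk S_N\otimes_\Bbbk S'$ for a residual polynomial ring $S'$, and one should remark that tensoring with $S'$ leaves Betti numbers unchanged; this also justifies identifying $b_{u_j,q_j}(S_j/I_j)$ with the $b_{u_j,q_j}(S/I_j)$ in the statement. (ii) Because the variable sets are disjoint, the decomposition $\bold m=\bold m_1+\cdots+\bold m_N$ with $\bold m_j$ supported on the $S_j$-variables is \emph{unique} for each $\bold m$, so the inner ``sum'' in your K\"unneth identity is a single term; you implicitly use this when you say the right-hand side factors after summing over $\bold m$. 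With those remarks made explicit, the argument is complete.
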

\begin{lemma}\label{multigraded betti number and induced subgraph} If $m$ is a square-free monomial of degree $j$ and $t\geq 2$, then $b_{i,m}(S/I_t(G))=b_{i,j}(S/I_t(G_m))$.
\end{lemma}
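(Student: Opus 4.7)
The plan is to combine Theorem \ref{betti number by homology} with the observation that the subcomplex $\Theta_{<m}$ depends only on the minimal generators of $I_t(G)$ that divide $m$. Concretely, I will factor the identity into two pieces:
\[
b_{i,j}(S/I_t(G_m)) \;=\; b_{i,m}(S/I_t(G_m)) \;=\; b_{i,m}(S/I_t(G)).
\]

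For the first equality, use equation (\ref{definition of graded betti number}) to write $b_{i,j}(S/I_t(G_m)) = \sum_{\deg(m') = j} b_{i,m'}(S/I_t(G_m))$. Since $I_t(G_m)$ is a squarefree monomial ideal whose minimal generators only involve the variables dividing $m$, its multigraded Betti numbers are supported on squarefree monomials in the variables of $G_m$ (this is standard for squarefree ideals, and in any case Theorem \ref{betti number by homology} forces $m'$ to divide the lcm of the generators of $I_t(G_m)$, which in turn divides $m$). Among such multidegrees of total degree $j=\deg m$, only $m$ itself can appear, so every other summand vanishes.

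For the second equality, let $\Theta$ and $\Theta'$ denote the Taylor simplices of $I_t(G)$ and $I_t(G_m)$ respectively. Because $m$ is squarefree, a minimal generator $x_{i_1}\cdots x_{i_t}$ of $I_t(G)$ divides $m$ if and only if each $x_{i_\ell}$ divides $m$, if and only if the underlying path $x_{i_1},\ldots,x_{i_t}$ lies entirely in $G_m$; but since $G_m$ is the induced subgraph, such a path is exactly a minimal generator of $I_t(G_m)$. Hence the vertex sets of $\Theta_{<m}$ and $\Theta'_{<m}$ are canonically identified, and since the condition ``$\operatorname{lcm}(\tau)$ strictly divides $m$'' depends only on the face $\tau$ and the monomial $m$, the two subcomplexes are equal as simplicial complexes. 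Theorem \ref{betti number by homology} then gives $b_{i,m}(S/I_t(G))=\dim_{\Bbbk}\widetilde{H}_{i-2}(\Theta_{<m}) = \dim_{\Bbbk}\widetilde{H}_{i-2}(\Theta'_{<m}) = b_{i,m}(S/I_t(G_m))$. The only small case to address separately is when $m$ does not divide the lcm of the generators of $I_t(G)$: then some variable of $m$ belongs to no path of length $t-1$ in $G$, hence to none in $G_m$ either, so both sides vanish by Theorem \ref{betti number by homology}. I do not anticipate a genuine obstacle; the content is a direct compatibility check between the subcomplexes $\Theta_{<m}$ associated to $I_t(G)$ and to $I_t(G_m)$.
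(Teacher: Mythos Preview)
Your argument is correct and is precisely the kind of direct verification the paper has in mind: the paper's own proof is just the one-line pointer ``Proof is similar to Lemma~3.1 in \cite{erey faridi}'', and the method in that reference is exactly this identification of $\Theta_{<m}$ for $I_t(G)$ with $\Theta'_{<m}$ for $I_t(G_m)$ via Theorem~\ref{betti number by homology}, together with the collapse of the graded sum in \eqref{definition of graded betti number} to the single multidegree $m$. There is nothing to add; your write-up simply makes explicit what the paper leaves to the citation.
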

\begin{proof}
Proof is similar to Lemma 3.1 in \cite{erey faridi}.
\end{proof}
\section{Betti numbers of some path ideals}
\begin{definition}\label{definition useful simplicial complex} For any $n\geq t\geq 1$ the simplicial complex $\Omega^n_t$ on the set of vertices  $\{1,...,n\}$ is defined by
$$ \operatorname{Facets}(\Omega_t^n)=\big\{ \{1,...,\hat{i},\widehat{i+1},...,\widehat{i+t-1},i+t,...,n\} |  \ i=1,...,n-t+1   \big\} .$$

\end{definition}

\begin{example} For $n=5$ and $t=2$ the simplicial complex $\Omega^5_2$ has facets $\{\hat{1},\hat{2},3,4,5\}$, $\{1,\hat{2},\hat{3},4,5 \}$, $\{1,2,\hat{3},\hat{4},5 \}$ and, $\{1,2,3,\hat{4},\hat{5} \} $.
\end{example}
\begin{remark} For $n=t$ the simplicial complex $\Omega_t^n$ is the irrelevant complex $\{\emptyset \}$. If $t=1$ then $\Omega^n_1$ coincides with the boundary of an $n-1$ dimensional simplex.
\end{remark}
As the simplicial complex $\Omega^n_t$ will come up in the next sections, we study its homology groups.
\begin{lemma}\label{recursive formula for omega_n,t} For $n\geq 2t+1$ we have $\widetilde{H}_p(\Omega_t^n)\cong \widetilde{H}_{p-2}(\Omega_t^{n-t-1})$. Otherwise,
\begin{equation}
	\widetilde{H}_{p}(\Omega_t^n) \cong
    \begin{cases}
        \widetilde{H}_{p}(\{\emptyset \}) & \text{ if } n=t \\
      \widetilde{H}_{p-1}(\{\emptyset \}) & \text{ if } n=t+1 \\
      0 & \text{ if } t+2\leq n \leq 2t.
      
    \end{cases}
  \end{equation}
\end {lemma}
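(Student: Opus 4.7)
The plan is to perform a Mayer--Vietoris decomposition of $\Omega_t^n$ into two acyclic pieces and apply Corollary \ref{Corollary to MV acyclic union}, iterating it twice when $n\geq 2t+1$ to produce the shift by $2$ in homological degree. The case $n=t$ is immediate from the definition: the only facet is $\{1,\dots,t\}\setminus\{1,\dots,t\}=\emptyset$, so $\Omega_t^t=\{\emptyset\}$ and the formula falls out. For $n\geq t+1$ I would split $\Omega_t^n=\Delta_1\cup\Delta_2$, where $\Delta_1$ is generated by those facets $F_i$ that contain the vertex $n$ (equivalently $i=1,\dots,n-t$) and $\Delta_2$ is the remaining facet $F_{n-t+1}=\{1,\dots,n-t\}$. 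Since $\Delta_1$ is a cone with apex $n$ and $\Delta_2$ is a simplex, both are acyclic, and Corollary \ref{Corollary to MV acyclic union} yields $\widetilde{H}_p(\Omega_t^n)\cong\widetilde{H}_{p-1}(\Delta_1\cap\Delta_2)$ for every $p$.

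The heart of the argument is to identify $\Delta_1\cap\Delta_2$. Its faces are subsets of $\{1,\dots,n-t\}$ that avoid some block $\{i,\dots,i+t-1\}$ with $1\leq i\leq n-t$, so its candidate facets are the sets $\{1,\dots,n-t\}\setminus(\{i,\dots,i+t-1\}\cap\{1,\dots,n-t\})$. For $n=t+1$ the unique candidate collapses to $\emptyset$, giving $\Delta_1\cap\Delta_2=\{\emptyset\}$ and hence the stated formula. For $t+2\leq n\leq 2t$ every candidate is contained in $\{1,\dots,n-t-1\}$, so $\Delta_1\cap\Delta_2$ is a non-empty simplex, hence acyclic, forcing $\widetilde{H}_p(\Omega_t^n)=0$ for every $p$.

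Finally, for $n\geq 2t+1$ a size comparison shows that the actual facets of $\Delta_1\cap\Delta_2$ are precisely the sets $\{1,\dots,n-t\}\setminus\{i,\dots,i+t-1\}$ for $i=1,\dots,n-2t$ (each containing the vertex $n-t$) together with the single facet $\{1,\dots,n-t-1\}$ (which does not). I would then repeat the Mayer--Vietoris trick, writing $\Delta_1\cap\Delta_2=\Gamma_1\cup\Gamma_2$, where $\Gamma_1$ is generated by the first batch (a cone with apex $n-t$) and $\Gamma_2=\langle\{1,\dots,n-t-1\}\rangle$; both are acyclic, so $\widetilde{H}_{p-1}(\Delta_1\cap\Delta_2)\cong\widetilde{H}_{p-2}(\Gamma_1\cap\Gamma_2)$. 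Intersecting each facet of $\Gamma_1$ with $\{1,\dots,n-t-1\}$ simply strips the vertex $n-t$, leaving $\{1,\dots,n-t-1\}\setminus\{i,\dots,i+t-1\}$ for $i=1,\dots,n-2t$, which is exactly the facet list of $\Omega_t^{n-t-1}$, closing the recursion. The main point requiring care is verifying which of the candidate facets of $\Delta_1\cap\Delta_2$ are actually maximal; in particular, one must check that the candidate from $i=n-2t+1$ gets absorbed into $\{1,\dots,n-t-1\}$, so that the second decomposition yields $\Omega_t^{n-t-1}$ on the nose.
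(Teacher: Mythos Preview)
Your proof is correct and follows essentially the same two-step Mayer--Vietoris strategy as the paper's proof, the only difference being a harmless reflection: the paper peels off the facet $F_1=\{t+1,\dots,n\}$ and cones on the vertex $1$ (then on $t+1$), whereas you peel off $F_{n-t+1}=\{1,\dots,n-t\}$ and cone on the vertex $n$ (then on $n-t$). Under the involution $k\mapsto n+1-k$ of the vertex set, which is an automorphism of $\Omega_t^n$, the two decompositions coincide.
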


\begin{proof} The case $n=t$ is clear as $\Omega_t^t=\{\emptyset\}$. So we assume that $n >t$ and fix an index $p$. We write $\Omega_t^n=S\cup C$ where $S$ is the simplex on vertices $\{t+1,...,n\}$ and C is the cone generated by the facets of $\Omega_t^n$ that contain the vertex $1$. Note that by Corollary \ref{Corollary to MV acyclic union} we have $$\widetilde{H}_p(\Omega_t^n)\cong \widetilde{H}_{p-1}(S\cap C).$$ We consider the three cases left:

Case 1: If $n=t+1$ then $S\cap C$ is the irrelevant complex and we are done.

Case 2: If $t+2 \leq n \leq 2t$ then $S\cap C$ is a simplex whose maximal face is $\{t+2,...,n\}$. 

Case 3: If $n\geq 2t+1$ then it is not hard to check that $S\cap C$ can be written as a union $S \cap C= S_1 \cup C_1$ where $S_1=\langle \{t+2,...,n\}\rangle$ and, $C_1$ is the cone with apex $t+1$ such that
$$\operatorname{Facets}(C_1)=\big\{\{t+1,...,n\}-\{i,i+1,...,i+t-1\} \mid i=t+2,...,n-t+1\big\}. $$
Now observe that $S_1\cap C_1\cong \Omega_t^{n-t-1}$ and again by Corollary \ref{Corollary to MV acyclic union} we get $\widetilde{H}_{p-1}(S\cap C)\cong \widetilde{H}_{p-2}(S_1\cap C_1)\cong \widetilde{H}_{p-2}(\Omega_t^{n-t-1})$ which completes the proof.
\end{proof}

\begin{theorem}\label{homology of omega} The homology groups of $\Omega_t^n$ are given by
\begin{equation}\label{homology of omega equation}
	\widetilde{H}_{p}(\Omega_t^n) \cong
    \begin{cases}
       \widetilde{H}_{p+1-\frac{2n}{t+1}}(\{\emptyset\}) & \text{ if } n\equiv 0 \bmod{t+1} \\
      \widetilde{H}_{p+2-\frac{2(n+1)}{t+1}}(\{\emptyset \}) & \text{ if } n\equiv t  \bmod{t+1}  \\
      0 & \text{ otherwise. } 
      
    \end{cases}
  \end{equation}

\end{theorem}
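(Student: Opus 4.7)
The plan is a straightforward induction on $n$ using the previous Lemma to unwind the recursion. Since the Lemma tells us $\widetilde{H}_p(\Omega_t^n) \cong \widetilde{H}_{p-2}(\Omega_t^{n-t-1})$ whenever $n \geq 2t+1$, iterating this recursion will reduce the computation to one of the three base cases $n = t$, $n = t+1$, or $t+2 \leq n \leq 2t$. I would write $n = q(t+1) + r$ with $0 \leq r \leq t$ and split into cases based on $r$.

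In the case $n \equiv 0 \pmod{t+1}$ (so $r = 0$ and $n = q(t+1)$ with $q \geq 1$), as long as $q \geq 2$ we have $n \geq 2(t+1) > 2t+1$ so the recursion applies; iterating it $q-1$ times brings us down to $\Omega_t^{t+1}$, giving
\[
\widetilde{H}_p(\Omega_t^n) \cong \widetilde{H}_{p-2(q-1)}(\Omega_t^{t+1}) \cong \widetilde{H}_{p-2(q-1)-1}(\{\emptyset\}) = \widetilde{H}_{p+1-\frac{2n}{t+1}}(\{\emptyset\}),
\]
which matches the claim; the degenerate value $q=1$ is just the base case. In the case $n \equiv t \pmod{t+1}$ (so $n = q(t+1)+t$ with $q \geq 0$), if $q \geq 1$ then $n \geq 2t+1$ and the recursion applies; iterating it $q$ times brings us down to $\Omega_t^t = \{\emptyset\}$ with a shift of $2q$, and a little arithmetic using $q = \frac{n-t}{t+1} = \frac{n+1}{t+1} - 1$ gives the stated shift $p+2-\frac{2(n+1)}{t+1}$; the case $q=0$ is again immediate.

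For the remaining residues $1 \leq r \leq t-1$ (possible only when $t \geq 2$), either $q = 1$ so that $t+2 \leq n \leq 2t$ and the third base case of the Lemma gives $\widetilde{H}_p(\Omega_t^n) = 0$ directly, or $q \geq 2$ so that $n \geq 2(t+1) + 1 > 2t+1$ and we can apply the recursion $q-1$ times to land at $\Omega_t^{t+1+r}$ with $t+2 \leq t+1+r \leq 2t$, whose homology vanishes. Either way the result is zero, as required.

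The only moving parts are bookkeeping the arithmetic of indices under the shift-by-$2$ in the Lemma and verifying that the residue class of $n \bmod (t+1)$ is preserved by the recursion step; neither is a genuine obstacle, since the hard geometric work was already done in proving the Lemma. The main thing to be careful about is matching the boundary case $n = t+1$ in the class $r=0$ (where the recursion is not applied at all and one reads off the homology of $\{\emptyset\}$ with the shift $p-1$) against the general formula, and similarly matching $n = t$ in the class $r = t$ against $p + 2 - 2(t+1)/(t+1) = p$.
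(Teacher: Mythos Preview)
Your proposal is correct and follows exactly the approach the paper takes: the paper's proof consists of the single sentence ``Follows by a straightforward induction using Lemma~\ref{recursive formula for omega_n,t},'' and what you have written is precisely that induction spelled out in full, with the arithmetic verified in each residue class. There is nothing to add.
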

\begin{proof} 
Follows by a straightforward induction using Lemma \ref{recursive formula for omega_n,t}.
\end{proof}
\begin{corollary}\label{dimension of homology of omega}The dimensions of reduced homologies of $\Omega_t^n$ are independent of the ground field. And they are given by
\begin{equation} 
	\dim \widetilde{H}_{p}(\Omega_t^n) =
    \begin{cases}
       \delta_{p+2,\frac{2n}{t+1}} & \text{ if } n\equiv 0  \bmod{t+1}  \\
      \delta_{p+3,\frac{2(n+1)}{t+1}} & \text{ if } n\equiv t  \bmod{t+1}  \\
      0 & \text{ otherwise. } 
      \end{cases}
  \end{equation}
\end{corollary}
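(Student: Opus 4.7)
The plan is to derive the corollary as an immediate consequence of Theorem~\ref{homology of omega} combined with the explicit description of the homology of the irrelevant complex given in equation~(\ref{homology of irrelevant complex}). Nothing new about $\Omega_t^n$ itself needs to be proved; the work is purely bookkeeping on indices.

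First, I would recall that equation~(\ref{homology of irrelevant complex}) says $\widetilde{H}_q(\{\emptyset\},\Bbbk)\cong \Bbbk$ if $q=-1$ and is zero otherwise, so in particular $\dim_\Bbbk \widetilde{H}_q(\{\emptyset\},\Bbbk)=\delta_{q,-1}$, which visibly does not depend on $\Bbbk$. Then I would plug this into the three cases of Theorem~\ref{homology of omega}.

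In the case $n\equiv 0\pmod{t+1}$, Theorem~\ref{homology of omega} gives $\widetilde{H}_p(\Omega_t^n)\cong \widetilde{H}_{p+1-\frac{2n}{t+1}}(\{\emptyset\})$, so the dimension equals $\delta_{p+1-\frac{2n}{t+1},\,-1}=\delta_{p+2,\,\frac{2n}{t+1}}$, matching the claim. In the case $n\equiv t\pmod{t+1}$, the dimension equals $\delta_{p+2-\frac{2(n+1)}{t+1},\,-1}=\delta_{p+3,\,\frac{2(n+1)}{t+1}}$, again as stated. In all remaining residue classes mod $t+1$ the homology is zero, so the dimension is $0$.

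There is no real obstacle here: the corollary is essentially a restatement of Theorem~\ref{homology of omega} after substituting the known dimensions of $\widetilde{H}_*(\{\emptyset\})$ and rewriting $\delta_{q,-1}$ by shifting the index. The only thing to be a bit careful about is that each of the two nonzero cases in Theorem~\ref{homology of omega} produces a Kronecker delta in a different variable (namely $p+2$ versus $p+3$), so one must keep the two residue conditions $n\equiv 0$ and $n\equiv t\pmod{t+1}$ separate when translating. Independence from $\Bbbk$ is automatic since $\widetilde{H}_*(\{\emptyset\},\Bbbk)$ has dimensions $0$ or $1$ regardless of $\Bbbk$.
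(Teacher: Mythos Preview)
Your proposal is correct and follows exactly the paper's own approach: the paper's proof simply says ``Follows by Theorem~\ref{homology of omega} and Equation~\eqref{homology of irrelevant complex},'' and you have spelled out precisely that substitution and index shift.
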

\begin{proof}Follows by Theorem \ref{homology of omega} and Equation \eqref{homology of irrelevant complex}.
\end{proof}
\subsection{Lines and Cyles}
Throughout this section let $\Delta$ be the Taylor simplex of $I_t(L_n)$ where $L_n$ is a line on vertices $x_1,...,x_n$. If $n < t$ then there is no path on $L_n$ of length $t-1$, and therefore $I_t(L_n)=0$. Let us assume $n\geq t$ then we have
$$ \Delta= \langle x_ix_{i+1}...x_{i+t-1} \mid i=1,...,n-t+1 \rangle .$$ 
For simplicity, we replace the label of a vertex $x_ix_{i+1}...x_{i+t-1}$ with $i$ for all $i=1,...,n-t+1$. Hence $\Delta$ can be viewed as a simplex with maximal face $\{1,2,...,n-t+1\}$. Now we want to find $\Delta_{<m}$. Following the Equation \eqref{facets of theta less than m}, the maximal elements of
\begin{gather*}
\big\{\{\hat{1},2,...,n-t+1\}, \{1,...,n-t,\hat{n-t+1}\}\big\}  \\
\cup \big\{\{\hat{1},...,\hat{i},i+1,...,n-t+1\} \mid i=2,...,t \big\}\\
\cup \big\{ \{1,...,i-1,\hat{i},\widehat{i+1},...,\widehat{i+t-1},i+t,...,n-t+1 \} \mid i=2,...,n-2t+2\big\}\\
\cup \big\{ \{1,...,i-1,\hat{i},...,\widehat{n-t+1} \} \mid i=n-2t+3,...,n-t\big\}
\end{gather*}
give the facets of $\Delta_{<m}.$ Therefore, if $n<2t+1$ then
\begin{equation}\label{n<2t+1 line equation}
\Delta_{<m} = \langle \{\hat{1},2,...,n-t+1\}, \{1,...,n-t,\widehat{n-t+1}\} \rangle.
\end{equation}
And, if $n\geq 2t+1$ we have the following equation.
\begin{equation}\label{n >= 2t+1 line equation}
\begin{split}
\operatorname{Facets}(\Delta_{<m}) & = \big\{\{\hat{1},2,...,n-t+1\}, \{1,...,n-t,\widehat{n-t+1}\}  \big\}  \\ & \ \  \cup \big\{\{1,...,i-1,\hat{i},\widehat{i+1},...,\widehat{i+t-1},i+t,...,n-t+1\}  \mid i=2,...,n-2t+1 \big\}
\end{split}
\end{equation}
\begin{theorem}[\textbf{Top grade Betti numbers of path ideals of lines}]\label{top grade Betti numbers of path ideals of lines}For all $i\geq 1$, and $n\geq 1$, we have
\begin{equation}\label{equation in top grade Betti numbers of path ideals of lines } 
	b_{i,n}(S/I_t(L_n)) =
    \begin{cases}
       \delta_{i,\frac{2n}{t+1}} & \text{ if } n\equiv 0  \bmod{t+1}  \\
      \delta_{i+1,\frac{2n+2}{t+1}} & \text{ if } n\equiv t  \bmod{t+1}  \\
      0 & \text{ otherwise. } 
      \end{cases}
  \end{equation}
 \end{theorem}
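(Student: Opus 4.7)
By Theorem~\ref{betti number by homology}, since $m := x_1 x_2 \cdots x_n$ is the lcm of the minimal generators of $I_t(L_n)$, we have $b_{i,n}(S/I_t(L_n)) = \dim_{\Bbbk} \widetilde{H}_{i-2}(\Delta_{<m})$. In view of Corollary~\ref{dimension of homology of omega}, it therefore suffices to prove that $\widetilde{H}_p(\Delta_{<m}) \cong \widetilde{H}_p(\Omega_t^n)$ for every $p$. The plan is to split into the same ranges as in Lemma~\ref{recursive formula for omega_n,t}: the base range $t \leq n \leq 2t$ and the recursive range $n \geq 2t + 1$.

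For the base range I would work directly from \eqref{n<2t+1 line equation}. When $n = t$ the Taylor simplex has a single vertex and $\Delta_{<m} = \{\emptyset\}$; when $n = t+1$ the two facets $\{\hat{1}, 2\}$ and $\{1, \hat{2}\}$ reduce to two disjoint points; and for $t+2 \leq n \leq 2t$ the two facets of \eqref{n<2t+1 line equation} are simplices whose intersection $\langle \{2,\ldots,n-t\}\rangle$ is again a simplex, so Corollary~\ref{Corollary to MV acyclic union} yields $\widetilde{H}_\ast(\Delta_{<m}) = 0$. In each sub-case the resulting homology matches the value of $\widetilde{H}_\ast(\Omega_t^n)$ given by Lemma~\ref{recursive formula for omega_n,t}.

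For the main range $n \geq 2t+1$, the plan is to apply Corollary~\ref{Corollary to MV acyclic union} twice, mirroring the inductive step used for $\Omega_t^n$, in order to prove
\begin{equation*}
\widetilde{H}_p(\Delta_{<m}) \;\cong\; \widetilde{H}_{p-2}(\Omega_t^{n-t-1}).
\end{equation*}
Using \eqref{n >= 2t+1 line equation}, I would write $\Delta_{<m} = \mathcal{A} \cup \mathcal{B}$, where $\mathcal{B} = \langle \{2, \ldots, n-t+1\}\rangle$ is the simplex coming from the facet $\{\hat{1}, 2, \ldots, n-t+1\}$ and $\mathcal{A}$ is the subcomplex generated by the other facets. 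Each of those facets contains the vertex $1$, so $\mathcal{A}$ is a cone (hence acyclic), and Corollary~\ref{Corollary to MV acyclic union} gives $\widetilde{H}_p(\Delta_{<m}) \cong \widetilde{H}_{p-1}(\mathcal{A} \cap \mathcal{B})$. Intersecting each facet of $\mathcal{A}$ with $\{2, \ldots, n-t+1\}$ produces one ``boundary'' facet $\{2, \ldots, n-t\}$ together with a family of ``interior'' facets that all contain the vertex $n-t+1$; the interior ones form a cone, so a second application of Corollary~\ref{Corollary to MV acyclic union}, with $\mathcal{D} = \langle \{2,\ldots,n-t\}\rangle$ and $\mathcal{C}$ the cone, yields $\widetilde{H}_{p-1}(\mathcal{A} \cap \mathcal{B}) \cong \widetilde{H}_{p-2}(\mathcal{C} \cap \mathcal{D})$. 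A routine relabelling $k \mapsto k-1$ then identifies $\mathcal{C} \cap \mathcal{D}$ with $\Omega_t^{n-t-1}$, and combining with Lemma~\ref{recursive formula for omega_n,t} delivers $\widetilde{H}_p(\Delta_{<m}) \cong \widetilde{H}_p(\Omega_t^n)$.

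The main obstacle is the final combinatorial identification $\mathcal{C} \cap \mathcal{D} \cong \Omega_t^{n-t-1}$: one must check that after the two Mayer--Vietoris collapses the ``boundary defects'' $\{2,\ldots,n-t\}$ and $\{t+2,\ldots,n-t+1\}$ get absorbed into exactly the two end facets prescribed by Definition~\ref{definition useful simplicial complex}, while the middle facets indexed by $i = 3, \ldots, n-2t+1$ become precisely the interior facets of $\Omega_t^{n-t-1}$. This is pure index bookkeeping, but it requires careful treatment of the extreme cases $i = 2$ and $i = n-2t+1$ of \eqref{n >= 2t+1 line equation}.
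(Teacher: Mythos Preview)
Your approach is essentially identical to the paper's: both split into the ranges $n\le 2t$ and $n\ge 2t+1$, and in the latter case apply Corollary~\ref{Corollary to MV acyclic union} twice---first peeling off the facet missing vertex~$1$ (your $\mathcal{B}$, the paper's $S_1$) from the cone with apex~$1$, then peeling off the facet missing $n-t+1$ from the cone with apex $n-t+1$---to arrive at $\Omega_t^{n-t-1}$. The only omission is the trivial case $n<t$, where $I_t(L_n)=0$ and the formula holds since $n$ is neither $0$ nor $t\bmod{t+1}$.
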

\begin{proof}
First suppose that $n <t$, then we know that $I_t(L_n)=0$. As $n$ cannot be $0$ or $t \bmod{t+1}$ in this case we are done.

Now we assume that $n\geq t$. Let $m$ be the product of vertices of $L_n$. By Equation \eqref{definition of graded betti number} and Theorem \ref{betti number by homology} we have
$$ b_{i,n}(S/I_t(L_n))=b_{i,m}(S/I_t(L_n))=\operatorname{dim}_{\Bbbk}\widetilde{H}_{i-2}(\Delta_{<m},\Bbbk).$$
We consider two cases:

Case 1: Suppose that $n<2t+1$. By Equation \eqref{n<2t+1 line equation} we have
$$\Delta_{<m} = \langle \{\hat{1},2,...,n-t+1\}, \{1,...,n-t,\widehat{n-t+1}\} \rangle. $$ Then we have three cases to prove. If $n=t$, then $\Delta_{<m}=\{\emptyset\}$ and so that
$$ \operatorname{dim}_{\Bbbk}\widetilde{H}_{i-2}(\Delta_{<m},\Bbbk)= \operatorname{dim}_{\Bbbk}\widetilde{H}_{i-2}(\{\emptyset\},\Bbbk)=\delta_{i-2,-1}. $$
Observe that $\delta_{i-2,-1}=\delta_{i+1,\frac{2n+2}{t+1}}$ for $n=t$ which proves Equation \eqref{equation in top grade Betti numbers of path ideals of lines } for this case. Now observe that if $n>t$ then $\Delta_{<m}$ is a union of two simplices
$$\Delta_{<m} = \langle \{\hat{1},2,...,n-t+1\}\rangle \cup \langle \{1,...,n-t,\widehat{n-t+1}\} \rangle =S_1\cup S_2. $$
Hence by Corollary \ref{Corollary to MV acyclic union}, $\operatorname{dim}_{\Bbbk}\widetilde{H}_{i-2}(\Delta_{<m},\Bbbk)\cong \operatorname{dim}_{\Bbbk}\widetilde{H}_{i-3}(S_1\cap S_2,\Bbbk)$. If  $n=t+1$ then $S_1\cap S_2$ is the irrelevant complex. Therefore we have
$$\operatorname{dim}_{\Bbbk}\widetilde{H}_{i-3}(S_1\cap S_2,\Bbbk)\cong \operatorname{dim}_{\Bbbk}\widetilde{H}_{i-3}(\{\emptyset\},\Bbbk)=\delta_{i-3,-1}. $$
Now we check that indeed $\delta_{i-3,-1}=\delta_{i,\frac{2n}{t+1}}$ for $n=t+1$ and the proof follows for this case. Next, if $n\geq t+1$ then $S_1\cap S_2$ is a simplex and has trivial reduced homology in all degrees.

Case 2: Suppose that $n\geq 2t+1$. Then by Equation \eqref{n >= 2t+1 line equation} we have
$\Delta_{<m} = S_1  \cup S_2 \cup \Upsilon$ where 
$\Upsilon= \langle\{1,...,i-1,\hat{i},\widehat{i+1},...,\widehat{i+t-1},i+t,...,n-t+1\}  \mid i=2,...,n-2t+1 \rangle $, $S_1=\langle \{\hat{1},2,...,n-t+1\}\rangle$ and $S_2=\langle \{1,...,n-t,\widehat{n-t+1}\} \rangle. $ Now we write $\Delta_{<m}$ as a union $\Delta_{<m}=S_1\cup(S_2\cup \Upsilon)$ where $S_2\cup \Upsilon$ is a cone with apex $1$. By virtue of Corollary \ref{Corollary to MV acyclic union} we have 
$$\operatorname{dim}_{\Bbbk}\widetilde{H}_{i-2}(\Delta_{<m},\Bbbk)\cong \operatorname{dim}_{\Bbbk}\widetilde{H}_{i-3}(S_1 \cap (S_2\cup \Upsilon),\Bbbk).  $$
Now observe that $S_1\cap(S_2\cup \Upsilon)= C \cup S_2$ where $C$ is the cone generated by the facets of $S_1 \cap (S_2\cup \Upsilon)$ that contain the vertex $n-t+1$. Again by Corollary \ref{Corollary to MV acyclic union} we get
$$\operatorname{dim}_{\Bbbk}\widetilde{H}_{i-3}(S_1 \cap (S_2\cup \Upsilon),\Bbbk) \cong \operatorname{dim}_{\Bbbk}\widetilde{H}_{i-4}(C\cap S_2, \Bbbk). $$
Note that $C\cap S_2$ is isomorphic to the simplicial complex $\Omega_{t}^{n-t-1}$ and by Corollary \ref{dimension of homology of omega} we have.
\begin{equation} 
	\dim \widetilde{H}_{i-4}(\Omega_t^{n-t-1}) =
    \begin{cases}
       \delta_{i-2,\frac{2(n-t-1)}{t+1}} & \text{ if } n\equiv 0  \bmod{t+1}  \\
      \delta_{i-1,\frac{2(n-t)}{t+1}} & \text{ if } n\equiv t  \bmod{t+1}  \\
      0 & \text{ otherwise } 
      \end{cases}
  \end{equation}
  which agrees with the formula given in Equation \eqref{equation in top grade Betti numbers of path ideals of lines }.
\end{proof}
\begin{theorem}[\textbf{Multigraded Betti numbers of path ideals of lines}]\label{multigraded Betti numbers of path ideals of lines} Let $t\geq 2$ and $m$ be a squarefree monomial of degree $j$. Then the multigraded Betti number $b_{i,m}(S/I_t(L_n))=1$ if the induced graph $(L_n)_m$ consists of a collection of disjoint lines that satisfy the following conditions:
\begin{itemize} 
\setlength{\itemsep}{0pt}
  \item[\textnormal{(i)}] Each line is of order $0$ or $t\bmod{t+1}$
  \item[\textnormal{(ii)}] The number of lines of order $t \bmod{t+1}$ is equal to $\frac{i(t+1)-2j}{1-t}$.
 \end{itemize}
Otherwise, $b_{i,m}(S/I_t(L_n))=0$.
\end{theorem}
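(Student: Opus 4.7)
The plan is to reduce via Lemma \ref{multigraded betti number and induced subgraph} to a graded Betti number of a disjoint union of lines, then combine the product formula of Equation \eqref{top betti numbers of connected components} with Theorem \ref{top grade Betti numbers of path ideals of lines} componentwise, and finally match the resulting arithmetic to conditions (i) and (ii).

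First I would apply Lemma \ref{multigraded betti number and induced subgraph} to rewrite $b_{i,m}(S/I_t(L_n))$ as $b_{i,j}(S/I_t((L_n)_m))$. The induced subgraph $(L_n)_m$ is a disjoint union $L_{n_1} \sqcup \cdots \sqcup L_{n_k}$ of lines. If some $n_r < t$, then the vertices of $L_{n_r}$ appear in no minimal generator of $I_t((L_n)_m)$, so the lcm of its generators has degree strictly less than $j$; no squarefree monomial of degree $j$ can divide that lcm, forcing $b_{i,j}(S/I_t((L_n)_m)) = 0$ via Theorem \ref{betti number by homology}. Since any $n_r \in \{1,\dots,t-1\}$ is neither $0$ nor $t$ modulo $t+1$, this is exactly the case where condition (i) fails, so the desired formula holds vacuously there.

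Assume now every $n_r \geq t$. Then each $I_t(L_{n_r})$ has generators with lcm of degree exactly $n_r$, and the components use disjoint variables, so Equation \eqref{top betti numbers of connected components} applies with $q_r = n_r$ and $\sum q_r = j$, giving
$$b_{i,j}(S/I_t((L_n)_m)) = \sum_{u_1+\cdots+u_k = i} \prod_{r=1}^{k} b_{u_r,\,n_r}(S/I_t(L_{n_r})).$$
By Theorem \ref{top grade Betti numbers of path ideals of lines} each factor vanishes unless $n_r \equiv 0$ or $t \pmod{t+1}$, in which case it equals $1$ at a single value of $u_r$ (namely $2n_r/(t+1)$ or $2(n_r+1)/(t+1) - 1$, respectively). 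Consequently the sum is $0$ or $1$; it equals $1$ precisely when every $n_r$ satisfies (i) and the forced values of $u_r$ add up to $i$.

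It remains to match this to (ii). Let $s$ be the number of components with $n_r \equiv t \pmod{t+1}$. A brief computation using $\sum_r n_r = j$ and the two formulas for $u_r$ yields
$$\sum_r u_r \;=\; \frac{2j + 2s}{t+1} - s \;=\; \frac{2j - s(t-1)}{t+1},$$
so the equation $\sum u_r = i$ rearranges to $s = \frac{i(t+1) - 2j}{1-t}$, which is exactly condition (ii). The only delicate step is the small-component reduction, but that follows cleanly from the lcm divisibility requirement in Theorem \ref{betti number by homology}; everything else is straightforward assembly of cited results.
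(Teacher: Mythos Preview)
Your proof is correct and follows essentially the same route as the paper: reduce to the induced subgraph via Lemma~\ref{multigraded betti number and induced subgraph}, split the resulting top-grade Betti number into a product over the connected components using Equation~\eqref{top betti numbers of connected components}, evaluate each factor with Theorem~\ref{top grade Betti numbers of path ideals of lines}, and then solve the resulting equation $\sum u_r=i$ for the number $s$ of components of order $t\bmod{t+1}$. Your separate treatment of components with $n_r<t$ via the lcm-divisibility criterion in Theorem~\ref{betti number by homology} is slightly more explicit than the paper's handling of that edge case, but otherwise the arguments coincide.
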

\begin{proof}
Let $(L_n)_m= \cup_{l=1}^{p}Q_i$ be a disjoint union of lines where each $Q_l$ is a line of order $v_l$. We have
\begin{equation*}
\begin{split}
 b_{i,m} (S/I_t(L_n)) & =   b_{i,j} (S/I_t((L_n)_m))  \text{ by Lemma \ref{multigraded betti number and induced subgraph}} \\ & =   \sum_{u_1+...+u_p=i } b_{u_1,v_1}(S/I_t(Q_1)).... b_{u_p,v_p}(S/I_t(Q_p))  \text{ by Equation \eqref{top betti numbers of connected components}.}
\end{split}
\end{equation*}
By Theorem \ref{top grade Betti numbers of path ideals of lines} if one of $Q_l$ is not of order $0$ or $t \bmod{t+1}$ then the sum above is zero. So without loss of generality let us assume that $Q_1,...,Q_z$ are of order  $0 \bmod{t+1}$ and $Q_{z+1},...,Q_{p}$ are of order $t \bmod{t+1}$ for some $0 \leq z \leq p$.  Again by Theorem \ref{top grade Betti numbers of path ideals of lines}, the sum above is equal to $1$ if
\begin{equation}\label{equation in proof of multigraded betti numbers of paths}
\sum_{l=1}^z\frac{2v_l}{t+1} + \sum_{l=z+1}^p(\frac{2v_l+2}{t+1}-1) = i
\end{equation}
and zero otherwise. Observe that \eqref{equation in proof of multigraded betti numbers of paths} holds iff $p-z =\frac{i(t+1)-2j}{1-t}$ since $v_1+...+v_p=j$.
Hence the result follows.
\end{proof}
\begin{corollary}\label{graded betti number of lines combinatorial}If $L$ is a line, $b_{i,j} (S/I_t(L))$ is the number of ways of choosing a collection of disjoint induced lines of $L$ that satisfy the following conditions:
\begin{itemize}
\setlength{\itemsep}{0pt}
\item[\textnormal{(i)}]The orders of the lines add up to $j$
\item[\textnormal{(ii)}] Each line is of order $0$ or $t\bmod{t+1}$
\item[\textnormal{(iii)}]The number of lines of order $t \bmod{t+1}$ is equal to $\frac{i(t+1)-2j}{1-t}$.
\end{itemize}
\end{corollary}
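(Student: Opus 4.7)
The plan is to express the graded Betti number as a sum of multigraded Betti numbers and invoke Theorem \ref{multigraded Betti numbers of path ideals of lines} termwise. Since $I_t(L)$ is a squarefree monomial ideal, the $\operatorname{lcm}$ of any set of generators is squarefree, so by Theorem \ref{betti number by homology} $b_{i,\mathbf{m}}(S/I_t(L))=0$ whenever $\mathbf{m}$ is not squarefree. Hence Equation \eqref{definition of graded betti number} reduces to
\[
b_{i,j}(S/I_t(L))=\sum_{\substack{m\text{ squarefree}\\ \deg m=j}} b_{i,m}(S/I_t(L)).
\]

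By Theorem \ref{multigraded Betti numbers of path ideals of lines}, each term on the right is either $0$ or $1$, and equals $1$ precisely when the induced subgraph $L_m$ is a disjoint union of sub-lines whose orders satisfy conditions (i) and (ii) of that theorem, which are exactly conditions (ii) and (iii) in the present statement.

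Finally, I would identify squarefree monomials $m$ of degree $j$ with vertex subsets $W\subseteq V(L)$ of cardinality $j$ via $m\leftrightarrow\{x_i : x_i\mid m\}$, so that $L_m=L[W]$. Any induced subgraph of a line is automatically a disjoint union of sub-lines, namely its connected components, and this decomposition of $W$ is uniquely determined. Therefore the squarefree monomials $m$ with $b_{i,m}=1$ are in bijection with collections of pairwise disjoint induced sub-lines of $L$ whose vertex sets partition a $j$-subset of $V(L)$ and whose orders satisfy (ii) and (iii); condition (i) is then automatic from $|W|=j$. Summing the contributions yields the desired combinatorial count.

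The main potential pitfall is to make sure that "choosing a collection of disjoint induced lines" is unambiguous and matches the canonical connected-component decomposition of the induced subgraph $L[W]$, so that no overcounting or undercounting occurs. This is immediate because in a line (path graph) the connected components of any induced subgraph are the maximal runs of consecutive vertices in $W$, and this partition is uniquely recovered from $W$.
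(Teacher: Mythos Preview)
Your proof is correct and follows the same approach as the paper, which simply states that the corollary ``immediately follows by Theorem \ref{multigraded Betti numbers of path ideals of lines} and Equation \eqref{definition of graded betti number}.'' You have merely unpacked this one-line justification, including the reduction to squarefree multidegrees and the identification of collections of disjoint induced sub-lines with connected components of $L_m$, both of which the paper leaves implicit.
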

\begin{proof}Immediately follows by Theorem \ref{multigraded Betti numbers of path ideals of lines} and Equation \eqref{definition of graded betti number}.
\end{proof}
Using the multigraded Betti numbers, we can calculate graded Betti numbers. The following result was also proved in \cite{alilooee faridi}.
\begin{theorem}[\textbf{Graded Betti numbers of path ideals of lines}]\label{graded betti numbers of paths exact formula} For $t \geq 2$, the nonzero graded Betti numbers of $S/I_t(L_n)$ are given by
 $$b_{i,j}(S/I_t(L_n))= {n-j+1 \choose \frac{i(t+1)-2j}{1-t} }{ n-j+\frac{j-ti}{1-t} \choose n-j  }$$
provided that $n, i$ and $j$ satisfy the following relations.
\begin{itemize}
\setlength{\itemsep}{0pt}
\item[\textnormal{(i)}]$n \geq j$
\item[\textnormal{(ii)}] $j \geq t\big(\frac{i(t+1)-2j}{1-t}\big)\geq 0$
\item[\textnormal{(iii)}]$n-j\geq \frac{i(t+1)-2j}{1-t}-1$
\end{itemize}
Otherwise, the  graded Betti numbers are zero. 
\end{theorem}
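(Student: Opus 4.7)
The plan is to derive the formula by applying Corollary \ref{graded betti number of lines combinatorial} and executing a careful enumeration using gap arguments and a Vandermonde collapse. Throughout, I set $k=\frac{i(t+1)-2j}{1-t}$ and $N=\frac{j-ti}{1-t}$, noting the consequent identities $j=kt+(t+1)N$ and $i=k+2N$ that tie the combinatorial parameters to the cohomological ones. By Corollary \ref{graded betti number of lines combinatorial}, $b_{i,j}(S/I_t(L_n))$ counts collections of disjoint induced subpaths of $L_n$ whose orders sum to $j$, each of order $\equiv 0$ or $t\bmod{t+1}$, with exactly $k$ subpaths of order $\equiv t\bmod{t+1}$.

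I stratify these collections by the number $r$ of subpaths of order $\equiv 0\bmod{t+1}$ and set $p=k+r$. For any fixed ordered composition $(v_1,\dots,v_p)$ of $j$ into $p$ parts, the number of embeddings into $L_n$ as disjoint consecutive induced subpaths is $\binom{n-j+1}{p}$, obtained from the standard stars-and-bars count on the $p+1$ gaps (two end gaps $\geq 0$, $p-1$ internal gaps $\geq 1$, summing to $n-j$). For the compositions themselves, I choose which $k$ of the $p$ slots are type-$t$, giving $\binom{p}{k}$, then parametrize each type-$t$ order as $t+(t+1)s_l$ with $s_l\geq 0$ and each type-$0$ order as $(t+1)u_l$ with $u_l\geq 1$. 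The constraint $\sum s_l+\sum(u_l-1)=N-r$ in $k+r$ nonnegative variables contributes $\binom{N+k-1}{k+r-1}$ solutions, so summing over $r\geq 0$ yields
\begin{equation*}
b_{i,j}(S/I_t(L_n))=\sum_{r\geq 0}\binom{n-j+1}{k+r}\binom{k+r}{k}\binom{N+k-1}{k+r-1}.
\end{equation*}

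The collapse proceeds in two steps. The subset-of-subset identity $\binom{n-j+1}{k+r}\binom{k+r}{k}=\binom{n-j+1}{k}\binom{n-j+1-k}{r}$ factors $\binom{n-j+1}{k}$ out of the sum. Flipping the remaining binomial via $\binom{N+k-1}{k+r-1}=\binom{N+k-1}{N-r}$ lines up the indices for Vandermonde's convolution, giving $\sum_r\binom{n-j+1-k}{r}\binom{N+k-1}{N-r}=\binom{n-j+N}{N}=\binom{n-j+N}{n-j}$. Multiplying back yields the claimed closed form. Finally, I would verify the range of validity: failure of $n\geq j$ kills every term by cardinality; failure of $j\geq tk\geq 0$ forces $k$ or $N$ to be negative, so no admissible composition exists; and failure of $n-j\geq k-1$ forces $\binom{n-j+1}{p}=0$ for every $p\geq k$. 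In all three failure modes the binomial product on the right also vanishes, so the formula extends consistently. The main technical hazard is the re-indexing for Vandermonde, where the binomial $\binom{N+k-1}{k+r-1}$ must be flipped so that the summation variable $r$ appears opposite to the $r$ in the other factor; I would take care to verify this rewrite carefully before collapsing, since an off-by-one here is easy to make and would destroy the match with $\binom{n-j+N}{n-j}$.
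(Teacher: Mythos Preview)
Your argument is correct and arrives at the same closed form, but the route differs from the paper's. The paper encodes each admissible collection as a word in the symbols ``$t$'', ``$t{+}1$'', and ``point'' (a subpath of order $t+c(t{+}1)$ becomes one ``$t$'' and $c$ copies of ``$t{+}1$'', while a subpath of order $c(t{+}1)$ becomes $c$ copies of ``$t{+}1$''; the $n-j$ vertices outside the collection become points). The only constraint on the word is that any two ``$t$'' symbols are separated by a point, and the relative order of ``$t$''s and ``$t{+}1$''s between consecutive points is irrelevant. The product then drops out directly: first place the $k$ ``$t$''s among the points (giving $\binom{n-j+1}{k}$), then distribute the $N$ ``$t{+}1$''s among the $n-j+1$ slots (giving $\binom{n-j+N}{n-j}$). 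No summation, no Vandermonde.

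Your method instead keeps track of the actual subpaths, stratifying by the number $r$ of type-$0$ subpaths, and then collapses the resulting sum via the subset-of-subset identity and Vandermonde. This is a perfectly sound alternative; in effect you are rediscovering the paper's bijection algebraically. The paper's encoding is slicker because it absorbs the variable number of subpaths into a fixed multiset of symbols, so the two binomials appear immediately. Your approach, on the other hand, makes the verification of the validity conditions (i)--(iii) slightly more transparent, since each corresponds to a visible constraint on $k$, $N$, or the stars-and-bars count. Either way the proof goes through; your reindexing for Vandermonde ($\binom{N+k-1}{k+r-1}=\binom{N+k-1}{N-r}$) is correct and the collapse to $\binom{n-j+N}{N}$ is valid.
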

\begin{proof}By Lemma \ref{graded betti number of lines combinatorial} it is clear that $b_{i,j}(S/I_t(L_n))=0$ if the condition $(i)$ or $(ii)$ fails. So let us assume that $(i)$ and $(ii)$ hold. 

Now suppose that we have chosen a collection of disjoint induced lines $Q_1,...,Q_p$ of $L_n$ as in Lemma \ref{graded betti number of lines combinatorial}. Since the orders of $Q_1,...,Q_p$ add up to $j$, we have $j=\lvert\cup_{k=1}^pV(Q_k)\rvert$. Also as the number of lines of order $t \bmod{t+1}$ is equal to $\frac{i(t+1)-2j}{1-t}$, at least $t\big(\frac{i(t+1)-2j}{1-t}\big)$ vertices of $\cup_{k=1}^pV(Q_k)$ belong to a line of order $t \bmod{t+1}$. Therefore at most $j-t\big(\frac{i(t+1)-2j}{1-t}\big)=(1+t)(\frac{j-ti}{1-t})$ vertices of $\cup_{k=1}^pV(Q_k)$ belong to a line of order $0 \bmod{t+1}$. Now it becomes clear that the problem of choosing a collection of disjoint induced lines of $L_n$ that is described in Lemma \ref{graded betti number of lines combinatorial} corresponds to the problem of ordering $\frac{i(t+1)-2j}{1-t}$ many $``t"$s, $\frac{j-ti}{1-t}$ many $``1+t"$s and $``n-j"$ many points on a row such that there is a point between any $``t"$s and the order of $``t"$s and $``t+1"$s between two points is ignored. (Note that for example, in the latter interpretation the orderings
$$\cdot \ t \ (t+1) \ (t+1) \ \cdot \ \cdot \ (t+1) \ t \ \cdot \ t \  \text{  and  } \  \cdot \ (t+1) \ t \ (t+1) \ \cdot \ \cdot \ t \ (t+1) \ \cdot \ t  $$
are considered as the same since they both correspond to the collection $L_{t+2(t+1)}, L_{(t+1)+t}, L_t$ where $$L_n=L_{3(t+1)+3t+4}=L_1 \cup L_{t+2(t+1)} \cup L_1 \cup L_1 \cup L_{(t+1)+t} \cup L_1 \cup L_t.)$$

Now to count the number of solutions to this problem we spread $\frac{i(t+1)-2j}{1-t}$ many $``t"$s on a row and put one point between any two: 
$$ t \ \cdot \  t \ \cdot \ t \ \cdot   \ ...\  \cdot \ t $$  

Observe that to achieve this there must be at least $\frac{i(t+1)-2j}{1-t}-1$ many points, i.e. $n-j\geq \frac{i(t+1)-2j}{1-t}-1 $ which is condition $(iii)$.

Now we are allowed to insert the remaining $n-j-(\frac{i(t+1)-2j}{1-t}-1)$ points. Observe that we have $\frac{i(t+1)-2j}{1-t}+1$ many places to put each of them as indicated with $ - $ below.
$$ - \ t \ - \ \cdot   \  t \ - \ \cdot  \ t \ - \ \cdot   \ ...\ - \  \cdot \  t \ -$$  
This is equivalent to finding the number of integer solutions to the equation
$$A_1+A_2+\ ... + \ A_{\frac{i(t+1)-2j}{1-t}+1}= n-j-\left(\frac{i(t+1)-2j}{1-t}-1 \right)$$
with $A_i \geq 0$, which is $ {n-j+1  \choose \frac{i(t+1)-2j}{1-t} }. $ 

Finally we insert the $``t+1"$s. Since the order of $``t"$s and $``t+1"$s between two points is ignored there are $n-j+1$ places (spaces between two points plus endpoints) to insert each $t+1$. 
But the number of ways of doing this is equal to the number of integer solutions of the equation
$$A_1+A_2+\ ... + \ A_{n-j+1}= \frac{j-ti}{1-t}$$
with $A_i \geq 0$, which is ${n-j+\frac{j-ti}{1-t} \choose n-j}. $ Hence the number of all possible collections is equal to
$$ {n-j+1  \choose \frac{i(t+1)-2j}{1-t} }{n-j+\frac{j-ti}{1-t} \choose n-j}  $$ and the proof is completed.
\end{proof}
\begin{corollary}[\textbf{Multigraded Betti numbers of path ideals of cycles}]\label{multigraded Betti numbers of cycles}Let $t\geq 2$ and $m$ be a squarefree monomial of degree $j<n$. Then the multigraded Betti number $b_{i,m}(S/I_t(C_n))=1$ if the induced graph $(C_n)_m$ consists of a collection of disjoint lines that satisfy the following conditions:
\begin{itemize}
\setlength{\itemsep}{0pt}
  \item[\textnormal{(i)}] Each line is of order $0$ or $t\bmod{t+1}$
  \item[\textnormal{(ii)}] The number of lines of order $t \bmod{t+1}$ is equal to $\frac{i(t+1)-2j}{1-t}$.
 \end{itemize}
Otherwise, $b_{i,m}(S/I_t(L_n))=0$.
\end{corollary}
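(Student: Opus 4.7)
The plan is to reduce the cycle case to the line case already handled by Theorem \ref{multigraded Betti numbers of path ideals of lines}. The crucial structural observation is that the hypothesis $j<n$ forces $m$ to omit at least one variable $x_k$, so the induced subgraph $(C_n)_m$ is obtained from $C_n$ by deleting at least one vertex. Removing any nonempty set of vertices from a cycle breaks it open into a disjoint union of lines (collapsing to a single line when the removed vertices form one arc of $C_n$). Hence $(C_n)_m$ is automatically a disjoint union of line graphs, and the content of the corollary is just the line-case statement applied to this disjoint union.

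Next I would apply Lemma \ref{multigraded betti number and induced subgraph} to pass from the multigraded to the graded regime:
\[ b_{i,m}(S/I_t(C_n)) = b_{i,j}(S/I_t((C_n)_m)). \]
Writing $(C_n)_m = Q_1 \cup \cdots \cup Q_p$ where $Q_l$ is a line of order $v_l$, the ideals $I_t(Q_l)$ are generated by monomials in pairwise disjoint sets of variables and the degree of the lcm of generators of $I_t(Q_l)$ is exactly $v_l$, so $v_1+\cdots+v_p=j$. Equation \eqref{top betti numbers of connected components} then yields
\[ b_{i,j}(S/I_t((C_n)_m)) = \sum_{u_1+\cdots+u_p=i} b_{u_1,v_1}(S/I_t(Q_1))\cdots b_{u_p,v_p}(S/I_t(Q_p)). \]

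From this identity the argument is a verbatim repetition of the proof of Theorem \ref{multigraded Betti numbers of path ideals of lines}. By Theorem \ref{top grade Betti numbers of path ideals of lines} each factor vanishes unless $v_l \equiv 0$ or $t \pmod{t+1}$; when every $v_l$ is of that form the contribution of the surviving term is $1$, and the condition that the indices $u_l$ match the top-grade nonzero positions of each factor forces the identity
\[ \sum_{v_l\equiv 0}\frac{2v_l}{t+1} + \sum_{v_l\equiv t}\Bigl(\frac{2v_l+2}{t+1}-1\Bigr) = i, \]
which, using $v_1+\cdots+v_p = j$, is equivalent to the number of lines of order $\equiv t\pmod{t+1}$ being $\frac{i(t+1)-2j}{1-t}$.

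There is no real obstacle here: the entire combinatorial counting has already been carried out for lines, and the only new input is the easy observation that $j<n$ forces $(C_n)_m$ to be a disjoint union of lines rather than a genuine cycle. That hypothesis is exactly what is needed to make the line-case machinery applicable; without it, the induced subgraph could equal $C_n$ itself and the reduction would fail.
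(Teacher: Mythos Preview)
Your proposal is correct and follows essentially the same route as the paper: you invoke Lemma~\ref{multigraded betti number and induced subgraph} to pass to $(C_n)_m$, observe that $j<n$ forces this induced subgraph to be a disjoint union of lines, and then feed that disjoint union into the line machinery. The only difference is cosmetic: the paper simply cites Theorem~\ref{multigraded Betti numbers of path ideals of lines} at this point, while you unroll its proof (the splitting via Equation~\eqref{top betti numbers of connected components} and the evaluation of each factor via Theorem~\ref{top grade Betti numbers of path ideals of lines}) explicitly.
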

\begin{proof}By Lemma \ref{multigraded betti number and induced subgraph} we have $b_{i,m}(S/I_t(C_n))=b_{i,j}(S/I_t((C_n)_m))$. Since $(C_n)_m$ is a disjoint union of lines the proof follows by Theorem \ref{multigraded Betti numbers of path ideals of lines}.
\end{proof}



In the next Corollary, we will give a formula for the graded Betti numbers $b_{i,j}(S/I_t(C_n))$ when $j<n$. Note that Theorem 4.13 of \cite{alilooee faridi 2} gives a complete formula for all Betti numbers.
\begin{corollary}[\textbf{Graded Betti numbers of path ideals of cycles}]\label{graded betti numbers of cycles exact formula}For $j<n$ and $t\geq 2$ the graded Betti numbers of $S/I_t(C_n)$ are given by
$$b_{i,j}(S/I_t(C_n))=\frac{n}{n-j}{n-j \choose \frac{i(t+1)-2j}{1-t}}{n-j-1+\frac{j-ti}{1-t} \choose n-j-1} $$
provided that
\begin{itemize}
\setlength{\itemsep}{0pt}
\item[\textnormal{(i)}]$n-1 \geq j$
\item[\textnormal{(ii)}]$j \geq t\big(\frac{i(t+1)-2j}{1-t}\big)\geq 0$
\item[\textnormal{(iii)}]$n-j \geq \frac{i(t+1)-2j}{1-t}$.
\end{itemize}
Otherwise, the graded Betti numbers are zero.
\end{corollary}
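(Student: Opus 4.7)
My approach is to combine Corollary \ref{multigraded Betti numbers of cycles} with the degree decomposition \eqref{definition of graded betti number} to express $b_{i,j}(S/I_t(C_n))$ as a combinatorial count, and then evaluate it via a cyclic symmetry argument that produces the characteristic factor $\tfrac{n}{n-j}$.

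Writing $k = \tfrac{i(t+1)-2j}{1-t}$ and $s = \tfrac{j-ti}{1-t}$, Corollary \ref{multigraded Betti numbers of cycles} together with \eqref{definition of graded betti number} identifies $b_{i,j}(S/I_t(C_n))$ with the number $C$ of subsets $S \subseteq V(C_n)$ of size $j$ whose induced subgraph $(C_n)_S$ is a disjoint union of lines of orders $\equiv 0$ or $t \pmod{t+1}$, with exactly $k$ components of the latter type; the hypothesis $j<n$ guarantees that $(C_n)_S$ really is such a disjoint union of lines rather than the whole cycle. To compute $C$, I would use a cyclic double count: by the symmetry of $C_n$, the number $f$ of valid subsets having a prescribed vertex $v$ unchosen is independent of $v$, and counting pairs $(S,v)$ with $v\notin S$ in two ways gives $nf=(n-j)C$, so $C=\tfrac{n}{n-j}f$.

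Next I would identify valid subsets with $v$ unchosen with sequences of arc lengths $(a_1,\dots,a_{n-j})$ read starting from $v$ and traveling once around the cycle: each $a_i\geq 0$, their sum equals $j$, exactly $k$ of them are $\equiv t\pmod{t+1}$ (and hence positive), and the remaining $a_i$ are $\equiv 0\pmod{t+1}$ (possibly zero). Choosing the $k$ distinguished positions in $\binom{n-j}{k}$ ways and writing $a_i=t+b_i(t+1)$ at those positions and $a_i=c_i(t+1)$ at the remaining ones (with $b_i,c_i\geq 0$), the constraint $\sum a_i=j$ becomes $\sum b_i+\sum c_i=s$, contributing $\binom{s+n-j-1}{n-j-1}$ further choices. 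Multiplying yields $f=\binom{n-j}{k}\binom{n-j-1+s}{n-j-1}$ and then the advertised formula for $C$. The hypotheses (i)--(iii) are precisely what is needed for these binomial coefficients to be nonzero: (ii) forces $k,s\geq 0$ (which is automatic from $j=tk+(t+1)s$ whenever any valid subset exists) and (iii) is $n-j\geq k$; when any of them fails one verifies directly that no valid subset exists, so the formula still evaluates to $0$.

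The main subtle step is justifying the bijection between valid subsets with $v\notin S$ and valid arc-length sequences --- the key observation is that cutting the cycle at a gap vertex $v$ neither merges nor splits any component of $(C_n)_S$, because the two neighbors of $v$ are by construction already endpoints of (possibly distinct) arcs bounded by gaps, so the cyclic and linear component structures agree. Beyond this, the proof is bookkeeping of the same flavor as the proof of Theorem \ref{graded betti numbers of paths exact formula}, with the cyclic averaging factor $\tfrac{n}{n-j}$ being the only genuinely new ingredient.
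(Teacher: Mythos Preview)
Your proposal is correct and follows essentially the same route as the paper: both start from Corollary~\ref{multigraded Betti numbers of cycles} together with \eqref{definition of graded betti number} to interpret $b_{i,j}$ combinatorially, then obtain the factor $\tfrac{n}{n-j}$ by a cyclic overcount/double-count argument (fixing a gap vertex and cutting the cycle open), and evaluate the resulting linear count to $\binom{n-j}{k}\binom{n-j-1+s}{n-j-1}$. The only cosmetic difference is that the paper quotes Theorem~\ref{graded betti numbers of paths exact formula} for this last count, whereas you redo it directly via your arc-length encoding and stars-and-bars; the two encodings are equivalent.
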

\begin{proof}By Corollary \ref{multigraded Betti numbers of cycles} and Equation \eqref{definition of graded betti number}, $b_{i,j}(S/I_t(C_n))$ is the number of ways one can choose a collection of disjoint induced lines on $C_n$ such that the orders of the lines add up to $j$, each line is of order $0$ or $t \bmod{t+1}$ and, the number of lines of order $t \bmod{t+1}$ is equal to $\frac{i(t+1)-2j}{1-t}$. Then this is a problem of ordering $\frac{i(t+1)-2j}{1-t}$ many $``t"$s, $\frac{j-ti}{1-t}$ many $``t+1"$s and $n-j$ many points around a circle such that there is at least one point between any $``t"$s and the order of $``t"$s and $``t+1"$s between two points is ignored. Any such ordering can be obtained by first fixing a point on the cycle and ordering the remaining $n-j-1$ points, $\frac{i(t+1)-2j}{1-t}$ many $``t"$s, $\frac{j-ti}{1-t}$ many $``t+1"$s on a row with the same conditions. By Theorem \ref{graded betti numbers of paths exact formula}, there are ${n-j \choose \frac{i(t+1)-2j}{1-t}}{n-j-1+\frac{j-ti}{1-t} \choose n-j-1}$ ways to do it. Also there are $n$ choices to fix a vertex on the cycle. However it is clear that $n{n-j \choose \frac{i(t+1)-2j}{1-t}}{n-j-1+\frac{j-ti}{1-t} \choose n-j-1}$ will give an overcount since fixing different points may yield the same ordering. To overcome this problem, consider a circle with a desired ordering. It has $n-j$ points and this ordering was counted once for fixing each of these points. Hence the result follows.
\end{proof}


\subsection{Stars}
Throughout this section $\mathcal{S}_n$ will be a star graph of size $n$.

\begin{lemma}\label{boundary of Delta} Let $G$ be a connected graph, let $\Delta$ be the Taylor simplex of $I_2(G)$. If $m$ is the product of the vertices of $G$ then the simplicial complex $\Delta_{< m}$ is the boundary of $\Delta$ iff $G$ is a star.
\end{lemma}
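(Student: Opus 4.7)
The plan is to express the facets of $\Delta_{<m}$ explicitly in terms of $G$ and compare them to those of the boundary $\partial\Delta$. Since $I_2(G)$ is minimally generated by the edges of $G$, the vertex set of the Taylor simplex is $V(\Delta)=E(G)$. Applying (\ref{facets of theta less than m}) with $m$ equal to the product of all vertices of $G$, each $F_i$ becomes
$$F_i = \{e \in E(G) : x_i \notin e\},$$
of cardinality $|E(G)| - \deg_G(x_i)$, and $\operatorname{Facets}(\Delta_{<m})$ consists of the maximal such $F_i$. On the other hand $\operatorname{Facets}(\partial \Delta) = \{V(\Delta)\setminus\{e\} : e\in E(G)\}$, each facet of cardinality $|E(G)|-1$. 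So the question reduces to a purely combinatorial comparison of these two families.

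For the forward direction, suppose $\Delta_{<m} = \partial \Delta$. First, $G$ has no isolated vertex, since an isolated $x_i$ would give $F_i = V(\Delta)$ and force $\Delta_{<m}=\Delta$ by Remark \ref{isolated vertex}. Because every facet of $\partial \Delta$ has cardinality $|E(G)|-1$, each maximal $F_i$ must satisfy $\deg_G(x_i)=1$, so $x_i$ is a leaf. Moreover, since the facets of $\partial\Delta$ are indexed by the edges of $G$, for every edge $e\in E(G)$ there must exist a leaf $x_i$ whose unique incident edge is $e$. Hence \emph{every edge of $G$ has at least one leaf endpoint}. I would then finish with the graph-theoretic claim that a connected simple graph with this property is a star: if vertices $u\neq v$ both had degree $\geq 2$, then either $\{u,v\}\in E(G)$ (which would have no leaf endpoint) or a shortest $u$–$v$ path of length $\geq 2$ would force its first interior vertex to be a leaf yet still have two neighbors, a contradiction. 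Thus at most one vertex of $G$ has degree $\geq 2$, and connectedness then forces $G$ to be a star.

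For the reverse direction, let $G=\mathcal{S}_n$ have center $c$ and leaves $\ell_1,\dots,\ell_n$ with $e_j=\{c,\ell_j\}$. Then $F_c=\emptyset$ while $F_{\ell_j}=V(\Delta)\setminus\{e_j\}$, so the $F_{\ell_j}$ are pairwise incomparable and of maximal size $|E(G)|-1$, giving $\operatorname{Facets}(\Delta_{<m})=\{V(\Delta)\setminus\{e_j\} : 1\leq j \leq n\}=\operatorname{Facets}(\partial\Delta)$, as desired.

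The main obstacle is the forward-direction graph-theoretic step showing that the local condition ``every edge has a leaf endpoint'' together with connectedness forces the star structure; everything else is straightforward bookkeeping with the facet formula (\ref{facets of theta less than m}) and a counting argument on facet sizes.
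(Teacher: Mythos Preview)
Your argument is correct and follows essentially the same route as the paper: both reduce the equality $\Delta_{<m}=\partial\Delta$ to the combinatorial condition that every edge of $G$ has a leaf endpoint, and then use connectedness to conclude $G$ is a star. The paper phrases the first step in terms of edges (each $\{e_1,\dots,e_q\}\setminus\{e_i\}$ must be a facet, i.e., $\operatorname{lcm}$ of the remaining edges properly divides $m$), whereas you index by vertices via (\ref{facets of theta less than m}) and count cardinalities, but these are the same observation; you also spell out the final graph-theoretic step, which the paper simply asserts.
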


\begin{proof}Suppose that $e_1,...,e_q$ are the edges of the graph $G$. Then, 
$\Delta_{< m}$ is the boundary of $\Delta$ iff
$F_i=\{e_1,...,e_q\} - \{e_i\}$ is a facet of  $\Delta_{<m}$ for each $i=1,...,q.$ The latter holds only if multidegree of $F_i$ properly divides $m$ for every $i$. Or, equivalently
each $e_i$ contains a vertex $x_i$ such that $x_i \notin \cup_{j \neq i}e_j $. But this happens only if $G$ is a star since $G$ is connected.
\end{proof}

\begin{corollary}\label{top betti of star graphs}
Let $G$ be a star on $d+1$ vertices. Then
\begin{equation*}
     b_{i,d+1} (S/I(G))=
    \begin{cases}
      1 & \text{if } i=d \\
      0 & \text{otherwise}
    \end{cases}
  \end{equation*}
\end{corollary}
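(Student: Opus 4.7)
The plan is to reduce everything to a direct application of Theorem \ref{betti number by homology} via Lemma \ref{boundary of Delta}. Since $G$ is a star on $d+1$ vertices, its edge ideal $I_2(G)=I(G)$ has exactly $d$ minimal generators, and its Taylor simplex $\Delta$ is therefore a $(d-1)$-dimensional simplex.

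First I would reduce the graded Betti number to a single multigraded one. Let $m$ denote the product of all $d+1$ vertices of $G$. Because Betti numbers $b_{i,m'}(S/I(G))$ can only be nonzero on squarefree multidegrees $m'$ dividing $\operatorname{lcm}$ of the generators (which is $m$ itself), the only squarefree monomial of degree $d+1$ in play is $m$. Hence by Equation \eqref{definition of graded betti number},
\[
b_{i,d+1}(S/I(G))=b_{i,m}(S/I(G)) .
\]

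Next, by Lemma \ref{boundary of Delta} the subcomplex $\Delta_{<m}$ is the boundary $\Sigma$ of the simplex $\Delta$. Since $\Delta$ has $d$ vertices (one per edge of $G$), Equation \eqref{homology of boundary} gives
\[
\widetilde{H}_{p}(\Sigma,\Bbbk) \cong
\begin{cases}
\Bbbk & \text{if } p=d-2,\\
0 & \text{otherwise.}
\end{cases}
\]

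Finally, applying Theorem \ref{betti number by homology} yields
\[
b_{i,m}(S/I(G)) = \dim_{\Bbbk}\widetilde{H}_{i-2}(\Delta_{<m},\Bbbk),
\]
which equals $1$ precisely when $i-2=d-2$, i.e.\ when $i=d$, and is $0$ otherwise. There is no real obstacle here, as Lemma \ref{boundary of Delta} has already done the structural work of identifying $\Delta_{<m}$; the only point that deserves mention is justifying why the graded Betti number collapses to a single multigraded contribution, which follows from the count of squarefree monomials of degree $d+1$ on the $d+1$ vertices of $G$.
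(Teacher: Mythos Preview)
Your proposal is correct and follows exactly the route indicated in the paper: combine Lemma~\ref{boundary of Delta}, Theorem~\ref{betti number by homology}, and Equation~\eqref{homology of boundary}. The only addition you make is spelling out why $b_{i,d+1}(S/I(G))=b_{i,m}(S/I(G))$, which the paper leaves implicit; this is a welcome clarification and does not change the approach.
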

\begin{proof} Follows by combining Lemma \ref{boundary of Delta}, Theorem \ref{betti number by homology} and Equation \eqref{homology of boundary}.
\end{proof}
\begin{corollary}\label{graded betti numbers of edge ideals of stars}Let $G$ be a star on $d+1$ vertices. Then the graded Betti numbers of $I(G)$ are given by 
\begin{equation*}
     b_{i,d+1-j} (S/I(G))=
    \begin{cases}
      {d \choose j} & i=d-j \\
      0 & \text{otherwise}
    \end{cases}
  \end{equation*}
\end{corollary}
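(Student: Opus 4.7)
The plan is to reduce the graded Betti number to a sum over multigraded components and then apply the previous corollary on top-grade Betti numbers of stars. Start by writing
\[
b_{i,d+1-j}(S/I(G)) = \sum_{\deg m = d+1-j} b_{i,m}(S/I(G))
\]
via Equation \eqref{definition of graded betti number}, where the sum runs over squarefree monomials $m$ of degree $d+1-j$ (nonsquarefree multidegrees contribute nothing for an edge ideal). By Lemma \ref{multigraded betti number and induced subgraph} applied with $t=2$, each summand equals $b_{i,d+1-j}(S/I(G_m))$, so the task becomes understanding the induced subgraph $G_m$.

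Next I would analyze which induced subgraphs $G_m$ can contribute. Let $v_0$ be the center of $\mathcal{S}_d = G$ and $v_1,\ldots,v_d$ its leaves. If $v_0 \nmid m$, then $G_m$ consists of isolated leaves and Remark \ref{isolated vertex} forces $b_{i,m}(S/I(G))=0$. If $v_0 \mid m$ but some leaf vertex not in $m$ is avoided, the remaining leaves in $G_m$ are still adjacent to $v_0$, so $G_m$ has no isolated vertex only when $v_0 \mid m$ and the complement of $\{v_0\}$ in $m$ is nonempty, in which case $G_m$ is itself a star on $d+1-j$ vertices.

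Now I invoke Corollary \ref{top betti of star graphs} applied to the star $G_m$ on $(d-j)+1$ vertices: the degree $d+1-j$ is precisely its top grade, so
\[
b_{i,d+1-j}(S/I(G_m)) = \begin{cases} 1 & i = d-j \\ 0 & \text{otherwise.} \end{cases}
\]
Thus the only contributing multidegrees are those squarefree monomials $m$ of degree $d+1-j$ that divide $v_0 v_1 \cdots v_d$ and include $v_0$. The number of such $m$ is the number of ways to select the remaining $d-j$ leaves from the $d$ available ones, namely $\binom{d}{d-j} = \binom{d}{j}$. Summing yields $b_{d-j,\,d+1-j}(S/I(G)) = \binom{d}{j}$ and all other $b_{i,d+1-j}$ vanish. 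There is no genuine obstacle here; the only care needed is in confirming that non-squarefree multidegrees of $S/I(G)$ vanish for this squarefree monomial ideal, and that the "star on $k$ vertices" interpretation of $G_m$ is valid whenever $v_0 \mid m$ and $j < d$.
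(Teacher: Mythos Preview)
Your proposal is correct and follows essentially the same approach as the paper: expand the graded Betti number via Equation~\eqref{definition of graded betti number}, use Lemma~\ref{multigraded betti number and induced subgraph} to pass to induced subgraphs, discard subgraphs with an isolated vertex via Remark~\ref{isolated vertex}, apply Corollary~\ref{top betti of star graphs} to the remaining induced stars, and count. The paper's proof is just a terser version of exactly this argument.
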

\begin{proof}Fix $j$ and recall Equation \eqref{definition of graded betti number} and Lemma \ref{multigraded betti number and induced subgraph}. Any induced subgraph of $G$ is either a star or contains isolated vertex. If it contains an isolated vertex then by Remark \ref{isolated vertex} the multigraded Betti number for such induced subgraph is zero. Hence by Corollary \ref{top betti of star graphs} we see that $b_{i,d+1-j} (S/I(G))$ is the number of induced star subgraphs of $G$ of order $d+1-j$ if $i=d-j$ and zero otherwise.
\end{proof}

\begin{proposition}\label{proposition simplicial complex}
Let $\Gamma$ be a simplicial complex which is not a cone. Suppose that $\langle F_1,...,F_q \rangle = \Gamma$ and there exists a sequence of distinct vertices $v_1,...,v_q$ of $\Gamma$ such that $v_i \notin F_j $ iff $i=j$. Then $\widetilde{H}_p(\Gamma, \Bbbk) \cong \widetilde{H}_{p-q+1}(\{\emptyset \}, \Bbbk) $ for any field $\Bbbk$.
\end{proposition}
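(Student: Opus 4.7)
The plan is to induct on the number of generators $q$, using Corollary \ref{Corollary to MV acyclic union} at each step to strip off one facet. Observe first that $q \geq 2$ automatically: since each $v_i$ is a vertex of $\Gamma$ it must belong to some $F_j$, and the hypothesis $v_i \notin F_i$ forces $j \neq i$, which is impossible when $q=1$.

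For the base case $q = 2$, I would write $\Gamma = \langle F_1 \rangle \cup \langle F_2 \rangle$ as a union of two acyclic simplices. Corollary \ref{Corollary to MV acyclic union} then gives $\widetilde{H}_p(\Gamma) \cong \widetilde{H}_{p-1}(\langle F_1\rangle \cap \langle F_2\rangle)$. Because $\Gamma$ is not a cone, no vertex lies in both $F_1$ and $F_2$, so $\langle F_1\rangle \cap \langle F_2\rangle = \{\emptyset\}$ is the irrelevant complex, and the desired isomorphism follows.

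For the inductive step with $q \geq 3$, I would decompose $\Gamma = A \cup B$ where $A = \langle F_q \rangle$ and $B = \langle F_1, \ldots, F_{q-1} \rangle$. Here $A$ is a simplex, and $B$ is a cone with apex $v_q$ since the hypothesis $v_q \in F_j$ for every $j \neq q$ forces $v_q$ into every generator of $B$; thus both $A$ and $B$ are acyclic and Corollary \ref{Corollary to MV acyclic union} yields $\widetilde{H}_p(\Gamma) \cong \widetilde{H}_{p-1}(A \cap B)$. A direct check shows $A \cap B = \langle G_1, \ldots, G_{q-1} \rangle$ where $G_j := F_q \cap F_j$, and the vertex sequence $v_1, \ldots, v_{q-1}$ satisfies $v_i \notin G_j$ iff $i = j$ (for $i \neq q$ we always have $v_i \in F_q$, so $v_i \in G_j$ is equivalent to $v_i \in F_j$).

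The one nontrivial verification is that $A \cap B$ is itself not a cone, so that the inductive hypothesis applies. If some vertex $w$ lay in every $G_j = F_q \cap F_j$ for $j = 1, \ldots, q-1$, then $w$ would lie in every $F_1, \ldots, F_q$, forcing $\Gamma$ to be a cone and contradicting the hypothesis. Applying the inductive hypothesis to $A \cap B$ with $q - 1$ generators then gives $\widetilde{H}_{p-1}(A \cap B) \cong \widetilde{H}_{(p-1) - (q-1) + 1}(\{\emptyset\}) = \widetilde{H}_{p-q+1}(\{\emptyset\})$, which completes the induction.
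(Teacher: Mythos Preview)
Your proof is correct and follows essentially the same route as the paper's own argument: induction on $q$, the same decomposition $\Gamma = \langle F_q\rangle \cup \langle F_1,\dots,F_{q-1}\rangle$ into a simplex and a cone with apex $v_q$, the same application of Corollary~\ref{Corollary to MV acyclic union}, and the same inductive step on $\langle F_1\cap F_q,\dots,F_{q-1}\cap F_q\rangle$ with the vertex sequence $v_1,\dots,v_{q-1}$. Your write-up is in fact slightly more careful than the paper's, since you spell out why the intersection is again not a cone, whereas the paper simply asserts this as ``clear''.
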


\begin{proof}
We induct on $q$, the number of facets. Since there is no simplex which satisfies the assumptions of the given Proposition, the basis step starts at $q=2$. 

Suppose that $\Gamma=\langle F_1, F_2 \rangle$ is not a cone and it has two vertices $v_1, v_2$ such that $v_i \notin F_j \Leftrightarrow i=j$. Then $\langle F_1 \rangle \cap \langle F_2 \rangle \cong \{\emptyset \}$. Since $\Gamma= \langle F_1 \rangle \cup \langle F_2 \rangle$ and $\langle F_1 \rangle, \langle F_2 \rangle$ are acyclic, by virtue of Corollary \ref{Corollary to MV acyclic union} we have $\widetilde{H}_p(\Gamma) \cong \widetilde{H}_{p-1}(\langle F_1 \rangle \cap \langle F_2 \rangle)= \widetilde{H}_{p-1}(\{\emptyset\})$ as desired.

Now let $\Gamma= \langle F_1,..., F_q \rangle, q\geq 3$ be a simplicial complex as in the statement of the Proposition. We write 
$$\Gamma = \langle F_1,...,F_{q-1} \rangle \cup \langle F_q \rangle $$ 
where $\langle F_q \rangle$ is a simplex and $\langle F_1,...,F_{q-1} \rangle$ is a cone with apex $v_q$. By Corollary \ref{Corollary to MV acyclic union} we have $\widetilde{H}_p(\Gamma)\cong \widetilde{H}_{p-1}(\langle F_1,...,F_{q-1} \rangle \cap \langle F_q \rangle)$. But observe that
$$\langle F_1,...,F_{q-1} \rangle \cap \langle F_q \rangle = \langle F_1\cap F_q,...,F_{q-1}\cap F_q \rangle $$
as $v_j \in F_i\cap F_q, v_j \notin F_j \cap F_q$ so that $F_i\cap F_q \nsubseteq F_j \cap F_q$ for all $1 \leq i \neq j \leq q-1$. Clearly, the simplicial complex $\langle F_1\cap F_q,...,F_{q-1}\cap F_q \rangle$ is not a cone and moreover
$$v_i \notin F_j \cap F_q \Leftrightarrow i=j$$  
for all  $1\leq i,j \leq q-1$. Hence it satisfies the inductive hypothesis and we get 
$$\widetilde{H}_{p-1}(\langle F_1\cap F_q,...,F_{q-1}\cap F_q \rangle)\cong \widetilde{H}_{p-q+1}(\{\emptyset\}) $$
which completes the proof.
\end{proof}
\begin{theorem}\label{top betti numbers of stars for path ideals of length two}Let $\mathcal{S}_n$ be a star graph of size $n\geq 2$. Then for all $i \geq 1$
\begin{equation}
b_{i,n+1}(S/I_3(\mathcal{S}_n)) =
    \begin{cases}
       i & \text{ if } n+1 = i+2  \\
      0 & \text{ otherwise. } 
      \end{cases}
\end{equation}
\end{theorem}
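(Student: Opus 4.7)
The plan is to compute $\widetilde{H}_p(\Delta_{<m})$ by induction on $n$, where $m = x_0 x_1 \cdots x_n$ is the product of all vertices of $\mathcal{S}_n$ and $\Delta$ is the Taylor simplex of $I_3(\mathcal{S}_n)$; by Theorem \ref{betti number by homology} this computes $b_{i,n+1}(S/I_3(\mathcal{S}_n))$. First I identify $\Delta_{<m}$ explicitly. The minimal generators of $I_3(\mathcal{S}_n)$ are the monomials $x_0 x_i x_j$ for $1 \leq i < j \leq n$ (paths through the center), and I relabel each as the edge $\{i,j\}$ of the complete graph $K_n$ on the leaves $\{1,\ldots,n\}$. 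A subset of generators has $\operatorname{lcm}$ strictly dividing $m$ iff the corresponding edges fail to cover $\{1,\ldots,n\}$, so the facets of $\Delta_{<m}$ are the simplices $F_v$ for $v=1,\ldots,n$, where the vertex set of $F_v$ consists of all edges of $K_n$ not incident to $v$. The main claim, to be proved by induction on $n$, is that $\widetilde{H}_p(\Delta_{<m}) \cong \Bbbk^{n-1}$ when $p = n-3$ and $\widetilde{H}_p(\Delta_{<m}) = 0$ otherwise. This immediately yields $b_{i,n+1}(S/I_3(\mathcal{S}_n)) = n-1$ precisely when $i = n-1$, i.e.\ when $n+1 = i+2$, which matches the theorem.

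The base case $n = 2$ is immediate since $I_3(\mathcal{S}_2)$ is principal and $\Delta_{<m} = \{\emptyset\}$. For the inductive step ($n \geq 3$), decompose $\Delta_{<m} = \Gamma' \cup F_n$ with $\Gamma' := F_1 \cup \cdots \cup F_{n-1}$; the facet $F_n$ is a simplex and hence acyclic. The key step is to apply Proposition \ref{proposition simplicial complex} to $\Gamma'$ with the distinguishing vertices $w_i := \{i,n\}$ for $i = 1, \ldots, n-1$. The hypotheses are readily checked: $\Gamma'$ is not a cone (no edge of $K_n$ can avoid every $v \in \{1,\ldots,n-1\}$, since its two endpoints cannot both equal $n$), and $w_i \notin F_j$ iff $i = j$, because the edge $\{i,n\}$ is incident to $j \in \{1,\ldots,n-1\}$ exactly when $j = i$. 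Proposition \ref{proposition simplicial complex} then gives $\widetilde{H}_p(\Gamma') \cong \widetilde{H}_{p-n+2}(\{\emptyset\})$, which equals $\Bbbk$ in degree $n-3$ and is zero elsewhere. I also observe that $\Gamma' \cap F_n = \bigcup_{v=1}^{n-1}(F_v \cap F_n)$ consists of subsets of edges of $K_{n-1}$ on the leaves $\{1,\ldots,n-1\}$ that fail to cover $\{1,\ldots,n-1\}$; this is precisely the analogous complex for $\mathcal{S}_{n-1}$, so by the inductive hypothesis its reduced homology is $\Bbbk^{n-2}$ in degree $n-4$ and zero elsewhere.

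To finish, I apply the Mayer-Vietoris sequence (Theorem \ref{Mayer-Vietoris theorem}) to $\Delta_{<m} = \Gamma' \cup F_n$. Since $F_n$ is acyclic, the only nontrivial portion of the sequence produces a short exact sequence
\[
0 \longrightarrow \Bbbk \longrightarrow \widetilde{H}_{n-3}(\Delta_{<m}) \longrightarrow \Bbbk^{n-2} \longrightarrow 0,
\]
forcing $\widetilde{H}_{n-3}(\Delta_{<m}) \cong \Bbbk^{n-1}$; in all other degrees the two adjacent terms vanish, so $\widetilde{H}_p(\Delta_{<m}) = 0$. The main conceptual obstacle is that Proposition \ref{proposition simplicial complex} cannot be applied to $\Delta_{<m}$ itself (every edge has two endpoints, so no single vertex can distinguish one facet $F_v$); the trick is to peel off the facet $F_n$ to obtain $\Gamma'$, which \emph{does} satisfy the hypotheses of Proposition \ref{proposition simplicial complex} via the vertices $\{i,n\}$, while the peeling simultaneously exhibits $\Gamma' \cap F_n$ as the analogous complex for the smaller star $\mathcal{S}_{n-1}$, making the induction go through.
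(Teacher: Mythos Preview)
Your proof is correct and follows essentially the same route as the paper: the same decomposition $\Delta_{<m}=\langle F_1,\ldots,F_{n-1}\rangle\cup\langle F_n\rangle$, the same application of Proposition~\ref{proposition simplicial complex} to $\langle F_1,\ldots,F_{n-1}\rangle$ using the vertices $x_0x_ix_n$ (your $w_i=\{i,n\}$), the same identification of the intersection with the complex for $\mathcal{S}_{n-1}$, and the same Mayer--Vietoris conclusion. Your relabelling of the generators as edges of $K_n$ is a pleasant cosmetic reformulation, but the argument is otherwise identical.
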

\begin{proof} Let $\mathcal{S}_n$ be a star graph of size $n$ with the edge set $E(\mathcal{S}_n)=\{\{x_0, x_i\} \mid i=1,...,n\}$. Suppose that $\Delta(\mathcal{S}_n)$ is the Taylor simplex of $I_3(\mathcal{S}_n)$. We prove the given statement by induction on $n$ and using Theorem \ref{betti number by homology} so that for all $i\geq 1$
$$b_{i,n+1}(S/I_3(\mathcal{S}_n)) = \operatorname{dim}_{\Bbbk}\widetilde{H}_{i-2}(\Delta(\mathcal{S}_n)_{<x_0...x_n}, \Bbbk). $$
For $n=2$, we have $\Delta(\mathcal{S}_2)_{<x_0x_1x_2} \cong \{\emptyset\}$ so the basis step is settled by \eqref{homology of irrelevant complex}.

Next we consider $\Delta(\mathcal{S}_n)_{<x_0x_1...x_n}$ for some $n\geq 3$. We have a decomposition
\begin{equation}\label{union star path ideal}\Delta(\mathcal{S}_n)_{<x_0x_1...x_n}=\Delta(\mathcal{S}_n)_{\leq x_0x_1...x_{n-1}} \bigcup (\bigcup_{i=1}^{n-1}\Delta(\mathcal{S}_n)_{\leq \frac{x_0x_1...x_n}{x_i}}) 
\end{equation} 
since $\Delta(\mathcal{S}_n)_{\leq x_1...x_n}$ is isomorphic to the irrelevant complex. For $i\geq 1$ we set
\begin{equation}\label{definition of F_i in stars}
\Delta(\mathcal{S}_n)_{\leq \frac{x_0...x_n}{x_i}}=\langle F_i \rangle := \langle \{x_0x_jx_k \mid j,k\in\{1,...,n\}-\{i\} \text{ and } j<k\}\rangle
\end{equation}
and note that every element of $\{F_1,...,F_n\}$ is maximal with respect to inclusion because of the symmetry of star graphs. By Equation \eqref{facets of theta less than m} we get $\Delta(\mathcal{S}_n)_{<x_0...x_n}=\langle F_1,...,F_n \rangle$. Observe that \eqref{union star path ideal} becomes
\begin{equation}\label{decomposition stars}
\Delta(\mathcal{S}_n)_{<x_0...x_n}= \langle F_n\rangle \cup \langle F_1,...,F_{n-1} \rangle
\end{equation}
by definition of $F_i$. Now we claim the followings:
\begin{itemize}
  \setlength{\itemsep}{0pt}
\item[(i)] $\langle F_n \rangle \cap \langle F_1,...,F_{n-1}\rangle \cong \Delta(\mathcal{S}_{n-1})_{<x_0...x_{n-1}}$
\item[(ii)]$\widetilde{H}_p(\langle F_1,...,F_{n-1} \rangle) \cong \widetilde{H}_{p-n+2}(\{\emptyset\})$.
\end{itemize}

Claim (i) is trivial as $F$ is a facet of $\langle F_n \rangle \cap \langle F_1,...,F_{n-1}\rangle$ iff $F=F_n\cap F_i$ for some $1\leq i \leq n-1$. But the latter means that $F$ consists of all paths of the form $x_0x_jx_k$ where $j,k\in\{x_1,...,x_n\}-\{x_i,x_n\}$ and $j \neq k$.

For claim (ii) we show that Proposition \ref{proposition simplicial complex} applies to the simplicial complex $\langle F_1,...,F_{n-1} \rangle$.To this end, we first check that $\langle F_1,...,F_{n-1} \rangle$ is not a cone. Assume for a contradiction it is a cone with apex $x_0x_ix_j$. Then $x_0x_ix_j \in F_1\cap...\cap F_{n-1}$ and $i,j\in \{1,...,n\}-\{1,...,n-1\}$. Thus $i=j=n$ which is a contradiction. Now let $v_1=x_0x_1x_n,...,v_{n-1}=x_0x_{n-1}x_n$ be a sequence of vertices of $\langle F_1,...,F_{n-1} \rangle$. Clearly we have $v_i\notin F_j \Leftrightarrow i=j$ which proves claim (ii). Therefore \eqref{homology of irrelevant complex} yields
\begin{equation}\label{eqn for mv seq in proof of stars}
\operatorname{dim}\widetilde{H}_p(\langle F_1,...,F_{n-1} \rangle) =
    \begin{cases}
       1 & \text{ if } p=n-3  \\
      0 & \text{ otherwise. } 
      \end{cases}
\end{equation}
Also by inductive assumption and (i) we have
\begin{equation}\label{eqn for mv intersection in proof of stars}
\operatorname{dim}\widetilde{H}_p(\langle F_n\rangle \cap \langle F_1,...,F_{n-1} \rangle) =
    \begin{cases}
       p+2 & \text{ if } p=n-4  \\
      0 & \text{ otherwise. } 
      \end{cases}
\end{equation}
Therefore Mayer-Vietoris sequence for \eqref{decomposition stars} is
\begin{gather*}
 .... \rightarrow  0 \rightarrow 0 \rightarrow \widetilde{H}_{n-1}(\Delta(\mathcal{S}_n)_{<x_0...x_n}) \rightarrow 0 \rightarrow 0 \rightarrow \widetilde{H}_{n-2}(\Delta(\mathcal{S}_n)_{<x_0...x_n}) \rightarrow 0 \rightarrow \\
   \widetilde{H}_{n-3}(\langle F_1,...,F_{n-1} \rangle) \rightarrow \widetilde{H}_{n-3}(\Delta(\mathcal{S}_n)_{<x_0...x_n}) \rightarrow \widetilde{H}_{n-4}(\langle F_n\rangle \cap \langle F_1,...,F_{n-1}\rangle) \rightarrow \\
  0 \rightarrow \widetilde{H}_{n-4}(\Delta(\mathcal{S}_n)_{<x_0...x_n}) \rightarrow 0 \rightarrow 0 \rightarrow \widetilde{H}_{n-5}(\Delta(\mathcal{S}_n)_{<x_0...x_n}) \rightarrow 0  \rightarrow 0 \rightarrow ....
    \end{gather*}
For $i\leq n-5$ and $i\geq n-2$ we have the sequence
$$0 \rightarrow 0 \rightarrow \widetilde{H}_{i}(\Delta(\mathcal{S}_n)_{<x_0...x_n}) \rightarrow 0 $$
which implies that $\widetilde{H}_{i}(\Delta(\mathcal{S}_n)_{<x_0...x_n})=0$. Hence Mayer-Vietoris sequence above becomes
\begin{gather}\label{mv seq in star}... \rightarrow 0 \rightarrow 0 \rightarrow 0 \rightarrow  \widetilde{H}_{n-3}(\langle F_1,...,F_{n-1} \rangle) \rightarrow  \widetilde{H}_{n-3}(\Delta(\mathcal{S}_n)_{<x_0...x_n}) \rightarrow \\ \label{mv seq in star line two}
\widetilde{H}_{n-4}(\langle F_n\rangle \cap \langle F_1,...,F_{n-1}\rangle) \rightarrow 0 \rightarrow \widetilde{H}_{n-4}(\Delta(\mathcal{S}_n)_{<x_0...x_n}) \rightarrow 0 \rightarrow 0 \rightarrow 0 \rightarrow ... 
\end{gather}
and by \eqref{mv seq in star line two} we see that $\widetilde{H}_{n-4}(\Delta(\mathcal{S}_n)_{<x_0...x_n})=0$. Hence \eqref{mv seq in star} and \eqref{mv seq in star line two} turn into
\small
$$...\rightarrow 0 \rightarrow \widetilde{H}_{n-3}(\langle F_1,...,F_{n-1} \rangle) \rightarrow  \widetilde{H}_{n-3}(\Delta(\mathcal{S}_n)_{<x_0...x_n}) \rightarrow \widetilde{H}_{n-4}(\langle F_n\rangle \cap \langle F_1,...,F_{n-1}\rangle) \rightarrow 0 \rightarrow ... $$
\normalsize 
which gives that 
\begin{equation*}
\begin{split}
 \operatorname{dim}\widetilde{H}_{n-3}(\Delta(\mathcal{S}_n)_{<x_0...x_n})  & = \operatorname{dim}\widetilde{H}_{n-3}(\langle F_1,...,F_{n-1} \rangle) + \operatorname{dim}\widetilde{H}_{n-4}(\langle F_n\rangle \cap \langle F_1,...,F_{n-1}\rangle)  \\ & =  1 + (n-2) \text{ by } \eqref{eqn for mv seq in proof of stars} \text{ and } \eqref{eqn for mv intersection in proof of stars} \\ & = n-1
\end{split}
\end{equation*}
and, the proof is completed.
\end{proof}
\begin{theorem}[\textbf{Graded Betti numbers of path ideals of stars}]\label{graded betti numbers of stars} Let $\mathcal{S}_n$ be a star graph of size $n\geq 2$. For all $i\geq 1$ the nonzero graded Betti numbers of $S/I_2(\mathcal{S}_n)$ and $S/I_3(\mathcal{S}_n)$ are given by
\begin{equation*}
b_{i,j}(S/I_2(\mathcal{S}_n)) =
    \begin{cases}
       {n \choose j-1} & \text{ if } i=j-1  \\
      0 & \text{ otherwise } 
      \end{cases}
\end{equation*}
and,
\begin{equation*}
b_{i,j}(S/I_3(\mathcal{S}_n)) =
    \begin{cases}
       i{n \choose j-1} & \text{ if } i=j-2  \\
      0 & \text{ otherwise } 
      \end{cases}
\end{equation*}
where $j\leq n+1$. In particular, $S/I_t(\mathcal{S}_n)$ has a $(t-1)$-linear resolution.
\end{theorem}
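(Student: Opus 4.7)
The plan is to derive both formulas by aggregating multigraded Betti numbers via Equation~\eqref{definition of graded betti number}, reducing each multigraded term to an induced subgraph through Lemma~\ref{multigraded betti number and induced subgraph}, and then applying the results already established for stars. The key combinatorial observation is that an induced subgraph of $\mathcal{S}_n$ on a vertex subset $V$ is either (a) a smaller star $\mathcal{S}_{|V|-1}$ when the center $x_0 \in V$, or (b) a disjoint union of isolated vertices when $x_0 \notin V$. This dichotomy is what produces the $\binom{n}{j-1}$ factor.

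For $I_2(\mathcal{S}_n)$, the formula is essentially a reparametrization of Corollary~\ref{graded betti numbers of edge ideals of stars}. Setting $d = n$ (so $\mathcal{S}_n$ has $d+1$ vertices) and substituting $j' = n+1-j$ into that corollary yields $b_{j-1,j}(S/I_2(\mathcal{S}_n)) = \binom{n}{n+1-j} = \binom{n}{j-1}$ and vanishing elsewhere, which matches the claim.

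For $I_3(\mathcal{S}_n)$, I would write
\[
b_{i,j}(S/I_3(\mathcal{S}_n)) = \sum_{\deg(m)=j} b_{i,j}(S/I_3((\mathcal{S}_n)_m)).
\]
If $x_0 \nmid m$ then $(\mathcal{S}_n)_m$ consists of $j$ isolated vertices, so $I_3((\mathcal{S}_n)_m)=0$ (equivalently, Remark~\ref{isolated vertex} kills this term). If $x_0 \mid m$, then $(\mathcal{S}_n)_m \cong \mathcal{S}_{j-1}$ whose top grade is $j$, so by Theorem~\ref{top betti numbers of stars for path ideals of length two} the summand equals $i$ exactly when $i = j-2$ and $j-1 \geq 2$, and is zero otherwise; the small cases $j \leq 2$ contribute nothing because no path of length two exists, consistently with the formula. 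The number of monomials $m$ with $x_0 \mid m$ is $\binom{n}{j-1}$ (choose which $j-1$ leaves to include), giving the claimed total $i\binom{n}{j-1}$ on the diagonal $i = j-2$ and zero elsewhere.

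The linearity of the resolution is immediate from the shape of the answer: in both cases the only nonzero values of $b_{i,j}(S/I_t(\mathcal{S}_n))$ lie on the single diagonal $j - i = t-1$, which is exactly the definition of a $(t-1)$-linear resolution of $S/I$. I do not anticipate a serious obstacle; the only delicate point is verifying that the boundary cases $j \in \{1,2\}$ for $t=3$ agree with the formula $i\binom{n}{j-1}$ (both sides are zero there), and that Lemma~\ref{multigraded betti number and induced subgraph} can be invoked in the case $x_0 \nmid m$ where the induced path ideal is the zero ideal.
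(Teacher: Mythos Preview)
Your proposal is correct and follows essentially the same approach as the paper: the paper's proof is the one-line ``Similar to proof of Corollary~\ref{graded betti numbers of edge ideals of stars},'' and that corollary is proved precisely by expanding into multigraded Betti numbers via Equation~\eqref{definition of graded betti number}, passing to induced subgraphs via Lemma~\ref{multigraded betti number and induced subgraph}, using Remark~\ref{isolated vertex} to discard the cases with an isolated vertex, and then invoking the top-grade result (Corollary~\ref{top betti of star graphs} for $t=2$, Theorem~\ref{top betti numbers of stars for path ideals of length two} for $t=3$) together with the count $\binom{n}{j-1}$ of induced substars. Your treatment is a faithful and slightly more explicit version of exactly this argument.
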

\begin{proof}
Similar to proof of Corollary \ref{graded betti numbers of edge ideals of stars}.
\end{proof}


\begin{thebibliography}{9}
\bibitem{alilooee faridi 2}A. Alilooee, S. Faridi, \emph{Graded Betti numbers of the path ideals of cycles and lines},  arXiv:1305.1651v1.
\bibitem{alilooee faridi} A. Alilooee, S. Faridi, \emph{On the resolution of path ideals of cycles}, Communications in Algebra, to appear.

\bibitem{banerjee} A. Banerjee, \emph{Regularity of path ideals of gap free graphs}, arXiv:1409.3583v1.
\bibitem{bayer peeva sturmfels} D. Bayer, I. Peeva, and B. Sturmfels, \emph{Monomial resolutions}, Math. Res. Lett. 5 (1998), 31–46.
\bibitem{bouchat ha okeefe} R. Bouchat, H.T. H\`{a}, A. O’Keefe, 
\emph{Path ideals of rooted trees and their graded Betti numbers}, J. Combin. Theory Ser. A 118 (2011), no. 8, 2411--2425.
\bibitem{morey et al} D. Campos, R. Gunderson, S. Morey, C. Paulsen, T. Polstra, \emph{Depths and Cohen-Macaulay properties of path ideals}, J. Pure Appl. Algebra 218 (2014), no. 8, 1537--1543.
\bibitem{conca} A. Conca, E. De Negri, \emph{M-Sequences, graph ideals and ladder ideals of linear type}, J. Algebra 211 (1999),
no. 2, 599--624.
\bibitem{erey faridi} N. Erey and S. Faridi: Multigraded Betti numbers of simplicial forests,  J. Pure Appl. Algebra 218 (2014), no. 10, 1800--1805.
\bibitem{he van tuyl} J. He, A. Van Tuyl, \emph{Algebraic properties of the path ideal of a tree}, Comm. Algebra 38 (2010), no. 5, 1725--1742.
\bibitem{Jacques} S. Jacques: Betti numbers of graph ideals, PhD thesis, University of Sheffield, math.AC/0410107 (2004).
\bibitem{Kubitzke Olteanu} M. Kubitzke, A. Olteanu, \emph{Algebraic properties of classes of path ideals of posets}, J. Pure Appl. Algebra 218 (2014), no. 6, 1012--1033.
\bibitem{kubik} B. Kubik, S. Sather-Wagstaff , \emph{Path ideals of weighted graphs}, arXiv:1408.1674v1.


\bibitem{Rotman algebraic topology} J.J. Rotman, \emph{An introduction to algebraic topology}, Graduate Texts in Mathematics, 119. Springer-Verlag, New York, 1988.
\bibitem{taylor} D. Taylor, Ideals generated by monomials in an $R$-sequence, Thesis, University of Chicago (1966).




\end{thebibliography}
\end{document}